\newcommand{\modifa}[1]{#1}
\newcommand{\modifb}[1]{#1}
\providecommand{\tabularnewline}{\\}
\newcommand{\lyxdot}{.}
\numberwithin{equation}{section}
\numberwithin{figure}{section}
\theoremstyle{plain}
\newtheorem{thm}{\protect\theoremname}
\theoremstyle{remark}
\newtheorem{rem}[thm]{\protect\remarkname}
\theoremstyle{definition}
\newtheorem{defn}[thm]{\protect\definitionname}
\providecommand{\definitionname}{Definition}
\providecommand{\remarkname}{Remark}
\providecommand{\theoremname}{Theorem}
\global\long\def\Pu{{p_1}}
\global\long\def\Pv{{p_2}}
\global\long\def\Pw{{p_3}}
\global\long\def\Px{{p_4}}
\global\long\def\Py{{p_5}}
\global\long\def\Pz{{p_6}}
\global\long\def\Pa{{p_7}}
\global\long\def\Pb{{p_8}}
\global\long\def\Pc{{p_9}}
\global\long\def\Pp{{p_{10}}}
\global\long\def\a{a}
\global\long\def\b{b}
\global\long\def\geq{\geqslant}
\global\long\def\leq{\leqslant}
\global\long\def\LB{{\text{LB}}}
\global\long\def\eqeq{{\text{EqEq}}}
\global\long\def\eqsys{{\text{EqSys}}}
\begin{document}
\title{Stability analysis of the vectorial lattice-Boltzmann method}
\author{Kévin Guillon}\address{Institut de Mathématiques de Bordeaux, CNRS, UMR 5251, Bordeaux, France}
\author{Romane Hélie, Philippe Helluy}\address{Université de Strasbourg, CNRS, IRMA UMR 7501, Inria Tonus, Strasbourg, France}


\begin{abstract}
We perform a stability analysis of the Vectorial Lattice-Boltzmann
Method (VLBM). The VLBM has been introduced in \cite{bouchut1999construction,aregba2000discrete,dubois2014entropy,coulette2019high,baty2023robust}.
It is a variant of the LBM with extended stability features:
it allows handling compressible flows with shock waves, while the
LBM is limited to low-Mach number regime. The stability analysis is
based on the Legendre transform theory. We also propose a new tool:
the equivalent system analysis, which we conjecture to contains both
the stability and the consistency of the VLBM. 
\end{abstract}

\keywords{vectorial lattice-Boltzmann, entropy stability, equivalent equation
analysis}

\maketitle

\section{Introduction}

The Vectorial Lattice-Boltzmann Method (VLBM) is a variation of the Lattice-Boltzmann Method (LBM) proposed by several authors \cite{bouchut1999construction,aregba2000discrete,dellar2002lattice,dubois2014entropy,coulette2019high,baty2023robust} for solving systems of conservation laws. Compared to the traditional LBM, which uses a scalar distribution function, the VLBM utilizes a vectorial distribution function and offers several advantages. For instance, it allows for a fully rigorous entropy stability analysis \cite{bouchut1999construction,dubois2014entropy}, which ensures that the resulting schemes can compute shock waves \cite{baty2023robust} while the standard LBM is only stable on low-Mach number flows.

Both the LBM and the VLBM are based on a kinetic representation of the system of conservation laws, which consists of a set of transport equations coupled through a stiff relaxation term with a small parameter $\varepsilon$. Solving the transport and relaxation separately through a splitting method results in simple and efficient schemes.

However, this methodology raises several natural questions, three of which are listed below:

\begin{enumerate}
\item Is the kinetic representation stable as $\varepsilon \to 0$?
\item Is the kinetic representation consistent with the system of conservation laws as $\varepsilon \to 0$?
\item Is the splitting scheme consistent with the kinetic representation and/or the system of conservation laws as $\varepsilon \to 0$?
\end{enumerate}

The first question can be answered by an entropy analysis, which has been explored in prior work \cite{bouchut1999construction,dubois2014entropy}. However, the fully rigorous theory has not yet been achieved, especially for general systems of conservation laws in high-dimensional spaces. This is related to challenging questions, such as the mathematical theory of Navier-Stokes equations \cite{saint2009hydrodynamic}, strange behavior of the general solutions of conservation laws \cite{de2012h}, and uncertainty quantification \cite{fjordholm2017construction}.

The second question regarding whether or not the kinetic model converges towards the approximated conservation laws can be fully answered in some specific cases. However, the general case is also difficult. There are some semi-heuristic arguments, based on the Chapman-Enskog expansion \cite{chapman1990mathematical}, that provide explanations for the convergence of the stiff relaxation approximation. Among many works in this direction, \cite{dellar2001bulk} presents one approach.

The third question is related to the fact that the stiff relaxation is never resolved in practice. Instead, a time splitting scheme with over-relaxation is used. In most situations, this splitting algorithm does not provide a good approximation of the relaxation model, but rather it approximates the initial conservation laws directly, which is the ultimate objective. Theoretically, this approach is justified with an equivalent equation analysis, performed directly on the time splitting scheme without the relaxation intermediary. The analysis relies on a combination of Taylor and Chapman-Enskog expansions \cite{dellar2013interpretation} and generally provides relevant information on the consistency of the LBM or VLBM method.

However, the stability of the equivalent equation is generally not sufficient to ensure the stability of the VLBM \cite{bouchut2005stability}. Thus, separate studies are necessary for both the stability and the consistency of the method in practice.

In this work, we aim to provide a more comprehensive justification for the stability and consistency of the VLBM method. We begin by reviewing the entropy stability of the VLBM, which can be proven using the Legendre transform theory. We demonstrate that this approach can be naturally extended to prove the stability of the VLBM when an over-relaxed splitting scheme is used for time integration.

To analyze consistency, we propose a new algorithmic approach for constructing a system of equivalent Partial Differential Equations (PDEs) that includes not only the conservative variables but also the equilibrium deviation variables. In several examples, we show that the analysis of the equivalent system and the entropy analysis yield the same stability condition.

We then utilize an additional Chapman-Enskog expansion to remove the equilibrium deviation variables, under a smallness hypothesis. This enables us to recover the traditional equivalent equation provided by other authors \cite{dubois2008equivalent,graille2014approximation,courtes2020vectorial}.

Finally, we present numerical experiments to verify that the equivalent system with deviation variables provides more accurate information about the stability of the VLBM than the standard equivalent equation.

\section{Hyperbolic conservation law and duality}

\subsection{Hyperbolic systems of conservation laws}

In this work, we consider the numerical resolution of a system of
conservation laws
\begin{equation}
\partial_{t}W+\sum_{i=1}^{d}\partial_{i}Q^{i}(W)=0,\label{eq:conslaw}
\end{equation}
where the unknown is a vector of $m$ conserved quantities $W(X,t)\in\mathbb{R}^{m}$,
depending on a space variable $X\in\mathbb{R}^{d}$ and a time variable
$t\in\mathbb{R}$. Let $N=(N_{1},\ldots,N_{d})\in\mathbb{R}^{d}$
be a space vector. The flux of the system is defined by
\[
Q(W,N)=\sum_{i=1}^{d}Q^{i}(W)N_{i}.
\]
We assume that the system of conservation laws admits a Lax entropy
$W\mapsto s(W)$. Therefore, there is an entropy flux $\sum g^{i}(W)N_{i}$
such that
\[
\partial_{t}s(W)+\sum_{i=1}^{d}\partial_{i}g^{i}(W)=0,
\]
whenever $W$ is a smooth solution of (\ref{eq:conslaw}). This imposes
that
\begin{equation}
D_{W}s(W)D_{W}Q^{i}(W)=D_{W}g^{i}(W),\label{eq:entropyfluxrelation}
\end{equation}
where we have denoted by $D_{W}g(W)$ the Jacobian of $g(W)$. Let
us recall that the Jacobian is the transpose of the gradient
\begin{equation}
D_{W}s(W)=\nabla_{W}s(W)^{\intercal}.\label{eq:transpose}
\end{equation}
Thus $D_{W}s(W)$ is a row vector, while $\nabla_{W}s(W)$ is a column
vector.

In addition, we assume a convexity hypothesis of $s$ on a closed
convex cone $\mathcal{K}\subset\mathbb{R}^{m}$:
\[
s\text{ is strictly convex on }\mathcal{K}.
\]
 Finally, following a standard convex analysis convention, we extend
$s$ by an infinite value outside $\mathcal{K}$:
\begin{equation}
s(W)=+\infty,\quad W\in\complement\mathcal{K}.\label{eq:infty_outside}
\end{equation}

\begin{rem}
For working in a fully practical framework, it is indeed necessary
to consider cases where $\mathcal{K}\neq\mathbb{R}^{d}$. However,
this leads to mathematical questions that we have not yet investigated.
Therefore, in the following, the reader can suppose that $\mathcal{K}=\mathbb{R}^{d}$.
\end{rem}

\subsection{Entropy and symmetrization}

According to the Mock theorem \cite{lax1973hyperbolic,mock1980systems},
the system (\ref{eq:conslaw}) is symmetrizable and thus hyperbolic:
for all $W\in\mathcal{K}$ and all $N\in\mathbb{R}^{d}$ the Jacobian
of the flux
\[
A(W,N)=D_{W}Q(W,N)
\]
is diagonalizable with real eigenvalues.

Let us recall how to prove this result, because it will be useful.
For this we introduce the conjugate of the entropy \cite{hiriart2004fundamentals,hiriart2013convex}
defined by
\begin{equation}
s^{*}(W^{*})=\max_{W}(W^{*}\cdot W-s(W)),\label{eq:fenchel_transfo}
\end{equation}
where we denote by $\cdot$ the usual dot product: 
\[
W^{*}\cdot W=W^{*}{}^{\intercal}W=W{}^{\intercal}W^{*}.
\]
Thanks to (\ref{eq:infty_outside}) we can also write
\begin{equation}
s^{*}(W^{*})=\max_{W\in\mathcal{K}}(W^{*}\cdot W-s(W)),\label{eq:fenchel_transfo_K}
\end{equation}
The components of $W^{*}$ are called the dual variables, or entropy
variables \cite{harten1983symmetric,bourdel1989approximation,croisille1990contribution}.
The function $s^{*}$ is called the conjugate or the dual entropy.
The definition of the conjugate (\ref{eq:fenchel_transfo}) applies
to functions that are not necessarily smooth or convex. In the regular
case, when $s$ is smooth and strictly convex on $\mathbb{R}^{m}$,
for instance, $s^{*}$ is defined implicitly by the following two
relations
\begin{equation}
W^{*}=\nabla s(W(W^{*})),\label{eq:dualvar}
\end{equation}
\begin{equation}
s^{*}(W^{*})=W^{*}\cdot W(W^{*})-s(W(W^{*})).\label{eq:legendre_transfo}
\end{equation}
The formula (\ref{eq:dualvar}) determines uniquely the map
between the conservative variables $W$ and the dual variables $W^{*}$.
In the convex case, it can also be shown that $s^{**}=s$. Thus, we
have the reverse relations:
\[
W=\nabla s^{*}(W^{*}(W)),
\]
\[
s(W)=W^{*}(W)\cdot W-s^{*}(W^{*}(W)).
\]
We can also define the dual entropy flux by the relation
\begin{equation}
P^{i,\star}(W^{*})=W^{*}\cdot Q^{i}(W(W^{*}))-g^{i}(W(W^{*})).\label{eq:dualflux}
\end{equation}
We do not use exactly the same symbol for the dual entropy ($*$)
and the dual flux ($\star$), because the definitions are slightly
different. An important fact is that the knowledge of $s^{*}(W^{*})$
and $P^{i,\star}(W^{*})$ is sufficient to reconstruct the system
of conservation laws (\ref{eq:conslaw}). Indeed,
\begin{align*}
\nabla_{W^{*}}P^{i,\star}(W^{*})&=Q^{i}(W(W^{*}))+W^{*}\cdot D_{W}Q^{i}(W(W^{*}))D_{W^{*}}W(W^{*})-D_{W}g^{i}D_{W^{*}}W(W^{*}),
\\
&=Q^{i}(W(W^{*}))+D_{W}s(W)D_{W}Q^{i}(W(W^{*}))D_{W^{*}}W(W^{*})-D_{W}g^{i}D_{W^{*}}W(W^{*}),
\\
\intertext{(because of (\ref{eq:transpose}) and (\ref{eq:entropyfluxrelation}))}
&=Q^{i}(W(W^{*}))+D_{W}g^{i}(W(W^{*}))D_{W^{*}}W(W^{*})-D_{W}g^{i}(W(W^{*}))D_{W^{*}}W(W^{*}),
\end{align*}
(because of (\ref{eq:dualvar})), and thus

\begin{equation}
\nabla_{W^{*}}P^{i,\star}(W^{*})=Q^{i}(W(W^{*})).\label{eq:dualfluxrel}
\end{equation}
In short: the gradient of the dual entropy gives the conservative
variables and the gradient of the dual flux gives the flux.
\begin{thm}
The change of variables $W^{*}\mapsto W(W^{*})$ symmetrizes the system
of conservation laws (\ref{eq:conslaw}).
\end{thm}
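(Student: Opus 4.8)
The plan is to pull the system (\ref{eq:conslaw}) back through the Legendre map $W^{*}\mapsto W(W^{*})$ and to check that the transported system is in Friedrichs-symmetric form, from which the diagonalizability of $A(W,N)$ follows by elementary linear algebra. The computation is essentially the differentiation of the two ``gradient'' identities already established above.

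First I would differentiate $W=\nabla_{W^{*}}s^{*}(W^{*})$ and the relation (\ref{eq:dualfluxrel}), $Q^{i}(W(W^{*}))=\nabla_{W^{*}}P^{i,\star}(W^{*})$. This gives $D_{W^{*}}W(W^{*})=\nabla^{2}_{W^{*}}s^{*}(W^{*})$ and $D_{W^{*}}\big(Q^{i}(W(W^{*}))\big)=\nabla^{2}_{W^{*}}P^{i,\star}(W^{*})$. Both right-hand sides are Hessians of scalar functions, hence symmetric matrices. Moreover $\nabla^{2}_{W^{*}}s^{*}$ is positive definite: since $s$ is smooth and strictly convex on $\mathcal{K}$ (assume $\mathcal{K}=\mathbb{R}^{m}$ as in the remark), the map $W^{*}=\nabla s(W)$ is a diffeomorphism by the inverse function theorem, and $\nabla^{2}s^{*}(W^{*})=\big(\nabla^{2}s(W(W^{*}))\big)^{-1}$.

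Next I would substitute $W=W(W^{*})$ into (\ref{eq:conslaw}) and apply the chain rule, using $\partial_{t}W=D_{W^{*}}W(W^{*})\,\partial_{t}W^{*}$ and $\partial_{i}Q^{i}(W)=D_{W^{*}}\big(Q^{i}(W(W^{*}))\big)\,\partial_{i}W^{*}$. By the previous step the system becomes
\[
\nabla^{2}_{W^{*}}s^{*}(W^{*})\,\partial_{t}W^{*}+\sum_{i=1}^{d}\nabla^{2}_{W^{*}}P^{i,\star}(W^{*})\,\partial_{i}W^{*}=0,
\]
which is the Friedrichs-symmetrized form (a symmetric positive-definite matrix in front of $\partial_{t}W^{*}$, symmetric matrices in front of the spatial derivatives). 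To recover the statement of the Mock theorem about $A(W,N)$, observe that in the new variables $A(W,N)$ is conjugate to $\big(\nabla^{2}_{W^{*}}s^{*}\big)^{-1}\sum_{i}\nabla^{2}_{W^{*}}P^{i,\star}N_{i}$, which is similar to the symmetric matrix $\big(\nabla^{2}_{W^{*}}s^{*}\big)^{-1/2}\big(\sum_{i}\nabla^{2}_{W^{*}}P^{i,\star}N_{i}\big)\big(\nabla^{2}_{W^{*}}s^{*}\big)^{-1/2}$, hence diagonalizable with real eigenvalues.

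The only delicate point is not the symmetrization computation itself, which is routine, but the regularity and global invertibility of the Legendre correspondence $W\leftrightarrow W^{*}$ together with the positive-definiteness of $\nabla^{2}_{W^{*}}s^{*}$; when $\mathcal{K}=\mathbb{R}^{m}$ and $s$ is smooth with $\nabla^{2}s>0$ this is immediate, and in the general case $\mathcal{K}\subsetneq\mathbb{R}^{m}$ one restricts to the interior of the domain of $s^{*}$ where it is differentiable, which is precisely the setting flagged in the remark above.
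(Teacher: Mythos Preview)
Your proof is correct and follows essentially the same route as the paper: rewrite $W$ and $Q^{i}$ as gradients of $s^{*}$ and $P^{i,\star}$ via the identities already established, apply the chain rule, and read off the Friedrichs form with symmetric Hessians and a positive-definite time coefficient. Your additional remarks on the positive-definiteness of $\nabla^{2}_{W^{*}}s^{*}$ (as the inverse of $\nabla^{2}s$) and on the diagonalizability of $A(W,N)$ through the square-root conjugation are correct embellishments that the paper leaves implicit.
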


\begin{proof}
Indeed
\[
\partial_{t}W+\sum_{i}\partial_{i}Q^{i}(W)=0,
\]
can be rewritten
\[
\partial_{t}\nabla_{W^{*}}s^{*}(W^{*})+\sum_{i}\partial_{i}\nabla_{W^{*}}P^{i,\star}(W^{*})=0,
\]
thanks to the above remark (\ref{eq:dualvar}) and (\ref{eq:dualfluxrel}),
or
\begin{equation}
D_{W^{*}W^{*}}^{2}s^{*}(W^{*})\partial_{t}W^{*}+\sum_{i}D_{W^{*}W^{*}}^{2}P^{i,\star}(W^{*})\partial_{i}W^{*}=0,\label{eq:mock_form}
\end{equation}
where the Hessian matrices $D_{W^{*}W^{*}}^{2}s^{*}(W^{*})$ and $D_{W^{*}W^{*}}^{2}P^{i,\star}(W^{*})$
are obviously symmetric and $D_{W^{*}W^{*}}^{2}s^{*}(W^{*})$ is positive
definite when $s^{*}$ is strictly convex.
\end{proof}
We have just proven the Mock theorem \cite{mock1980systems}. In the
following sections, we shall often try to guess directly a symmetrization
of the system of conservation laws, rather than the full dual entropy
theory to obtain the stability conditions of the relaxation scheme.
\begin{rem}
In the following, the practical stability will derive from a convexity
condition imposed on $s^{*}$. This may sound strange, because the
conjugate of any function is always convex, according to the standard
definition (\ref{eq:fenchel_transfo}). We need to be a little bit
more subtle. Actually, it is possible to define two different duality
transformations. Let us denote by Fenchel transform the $s^{*}$ given
by the first formula (\ref{eq:fenchel_transfo}). The Fenchel transform
derives from an optimization problem. Let us denote by Legendre transformation
the $s^{*}$ given by formula (\ref{eq:dualvar}) and (\ref{eq:legendre_transfo}).
The Legendre transform is thus defined by purely algebraic relations.
It is unambiguously defined, as soon as $W\mapsto W^{*}=\nabla_{W}s(W)$
is an invertible map. In this algebraic definition, $s^{*}$ is not
necessarily convex, but the convexity of $s^{*}$ is equivalent to
the convexity of $s$. The Legendre and Fenchel transform are different
in the general case. They coincide in the convex case.
\end{rem}

$\,$
\begin{rem}
In the most general case, the Legendre transform is a multivalued
function. It has to be defined from differential geometry tools. The
interested reader can refer to \cite{ekeland1977legendre} for an
introduction to this topic.
\end{rem}

\section{Vectorial kinetic model\label{sec:vec_kin}}

\subsection{Direct construction}

In this section, we recall the entropy theory of the kinetic representation.
This theory has a long history, see for instance \cite{lax1971shock,mock1980systems,harten1983symmetric,deshpande1986second,bourdel1989approximation,croisille1990contribution,perthame1990boltzmann}.
In our context it has been analyzed by Bouchut in \cite{bouchut1999construction,bouchut2003entropy}.
The ideas have been rephrased \modifb{with a systematic use of the Legendre transform} in
the work of Dubois \cite{dubois2014entropy}. Let us now recall the theory.

We consider a formal vectorial kinetic representation of system (\ref{eq:conslaw})
\begin{equation}
\partial_{t}F_{k}+V_{k}\cdot\nabla F_{k}=\frac{1}{\varepsilon}(F_{k}^{\text{\text{eq}}}-F_{k}),\quad k=1\ldots n_{v}.\label{eq:kin_relax}
\end{equation}
The approximate conservative vector is the sum of the kinetic vectors
\begin{equation}
W=\sum_{k=1}^{n_v} F_{k},\label{eq:sum_kin_eq_w}
\end{equation}
and the kinetic equilibrium vectors (or Maxwellians) are functions
of the conservative data 
\begin{equation}
F_{k}^{\text{\text{eq}}}=F_{k}^{\text{\text{eq}}}(W).\label{eq:def_equilibre}
\end{equation}
The kinetic velocities $V_{k}$ are $n_{v}$ constant and given vectors
of $\mathbb{R}^{d}$. In practice, it is interesting to introduce
a positive parameter $\lambda$, whose purpose is to change the size
of the kinetic velocities. So we shall often take
\begin{equation}
V_{k}=\lambda\tilde{V}_{k},\label{eq:scaling}
\end{equation}
where the directions $\tilde{V}_{k}$ of the kinetic velocities are
fixed. In this way, the kinetic velocities can be dilated.

For practical reasons we also introduce the kinetic vectors $F$ and
$F^{eq}$, which are column vectors made of all the stacked kinetic
data:
\[
F=(F_{1}^{\intercal},\ldots,F_{n_{v}}^{\intercal})^{\intercal},\quad F^{eq}=\left(\left(F_{1}^{eq}\right)^{\intercal},\ldots,\left(F_{n_{v}}^{eq}\right)^{\intercal}\right)^{\intercal}.
\]
With the help of the following diagonal matrices ($1_{m}$ is the
identity matrix of size $m\times m$)
\[
V^{i}=\left(\begin{array}{ccc}
V_{1}^{i}1_{m}\\
 & \ddots\\
 &  & V_{n_{v}}^{i}1_{m}
\end{array}\right),
\]
the kinetic system can also be written in the compact form
\[
\partial_{t}F+\sum_{i=1}^{d}\partial_{i}\left(V^{i}F\right)=\frac{1}{\varepsilon}(F^{\text{\text{eq}}}-F).
\]
When $\varepsilon\to0$, we expect that
\begin{equation}
F_{k}\simeq F_{k}^{eq}.\label{eq:expect_approx}
\end{equation}
If we assume that
\begin{equation}
W=\sum_{k=1}^{n_v} F_{k}^{\text{\text{eq}}}(W),\quad Q^{i}(W)=\sum_{k=1}^{n_{v}}V_{k}^{i}F_{k}^{\text{\text{eq}}}(W),\label{eq:algebraic_consistence}
\end{equation}
then, summing (\ref{eq:kin_relax}) on $k$ and using (\ref{eq:expect_approx})
we formally obtain
\[
\partial_{t}W+\sum_{i=1}^{d} \partial_{i}Q^{i}(W)\simeq0,
\]
and we have obtained an approximation of the initial system of conservation
laws.
\begin{rem}
For being more rigorous, we should have made more explicit in (\ref{eq:kin_relax})
and (\ref{eq:sum_kin_eq_w}) the dependency of $F_{k}$ and $W=\sum_{k}F_{k}$
with respect to $\varepsilon$. For instance, by using the notations
$F_{k}^{(\varepsilon)}$ and $W^{(\varepsilon)}$. But we prefer to
lighten the notations, and in the following the dependency with $\varepsilon$
will be implicit. This means that from now on, $W=\sum_{k}F_{k}$
is not an exact solution of (\ref{eq:conslaw}) but an approximate
one.
\end{rem}

For the moment, instead of relating the equilibrium (\ref{eq:def_equilibre})
to the conservation laws (\ref{eq:conslaw}), we assume that the equilibrium
is obtained from an entropy optimization principle. For this we introduce
a microscopic entropy
\[
\sigma(F)=\sum_{k=1}^{n_v} s_{k} (F_{k}),
\]
where the kinetic entropies $s_{k}$ are strictly convex functions
of the $F_{k}$ on $\mathcal{K}$. The macroscopic entropy is obtained
from the resolution of the following constrained optimization problem
\begin{equation}
s(W)=\min_{W=\sum_{k}F_{k}}\sigma(F).\label{eq:minprob}
\end{equation}
In optimization theory, this operation is known as an inf-convolution
operation \cite{hiriart2013convex}. The macroscopic entropy is the
inf-convolution of the kinetic entropies. In many works, the inf-convolution
operator is denoted with a $\square$. We thus have:
\[
s=s_{1}\square s_{2}\ldots\square s_{n_{v}}.
\]
We denote by $F_{k}^{\text{\text{eq}}}(W)$ the (supposed to be unique)
values of $F_{k}$ that achieve the minimum
\[
s(W)=\sum_{k=1}^{n_v} s_{k}(F_{k}^{\text{\text{eq}}}(W)).
\]
If we introduce the Lagrangian
\[
L(F,\Lambda)=\sum_{k=1}^{n_v}s_{k}(F_{k})+\Lambda\cdot \left(W-\sum_{k=1}^{n_v}F_{k}\right).
\]
The minimizer $F^{\text{\text{eq}}}(W)$ and the Lagrange multiplier
$\Lambda(W)$ are characterized by
\begin{equation}
\nabla_{F_{k}}s_{k}(F_{k}^{\text{\text{eq}}})=\Lambda,\quad\sum_{k=1}^{n_v}F_{k}^{\text{\text{eq}}}=W.\label{eq:lagcond}
\end{equation}
These relations are simply obtained by deriving the Lagrangian with
respect to $F_{k}$ or $\Lambda$.

An essential property of the inf-convolution is that the Fenchel transform
changes it into a sum. We thus have
\begin{equation}
s^{*}(W^{*})=\sum_{k=1}^{n_{v}}s_{k}^{*}(W^{*}).\label{eq:entropy_sum}
\end{equation}
Taking the Legendre transform of (\ref{eq:lagcond}) we see that the
couple $(F_{k}^{\text{\text{eq}}}(W),\Lambda(W))$ is  a solution to
\begin{equation}
F_{k}^{\text{\text{eq}}}=\nabla_{\Lambda}s_{k}^{*}(\Lambda(W)),\quad\sum_{k=1}^{n_v}F_{k}^{\text{\text{eq}}}=W.\label{eq:microdual}
\end{equation}
Summing over $k$ we also have the Lagrange multiplier in an easier
way
\[
\sum_{k=1}^{n_v}F_{k}^{\text{\text{eq}}}=\sum_{k=1}^{n_v}\nabla_{\Lambda}s_{k}^{*}(\Lambda(W)),
\]
\[
W=\nabla_{\Lambda}\sum_{k=1}^{n_v}s_{k}^{*}(\Lambda(W)),
\]
\[
=\nabla_{\Lambda}s^{*}(\Lambda(W)),
\]
and thus
\[
\Lambda(W)=W^{*}(W)=\nabla s(W).
\]
The Lagrange multiplier of the constrained optimization problem (\ref{eq:lagcond})
is simply the gradient of the macroscopic entropy.

Let us now assume the additional property
\begin{equation}
P^{i,\star}(W^{*})=\sum_{k=1}^{n_{v}}V_{k}^{i}s_{k}^{*}(W^{*}).\label{eq:maindual}
\end{equation}
Then we have
\begin{align*}
\nabla_{W^{*}}P^{i,\star}(W^{*})&=\sum_{k=1}^{n_{v}}V_{k}^{i}\nabla_{W^{*}}s_{k}^{*}(W^{*}),
\\
\nabla_{W^{*}}P^{i,\star}(W^{*})&=\sum_{k=1}^{n_{v}}V_{k}^{i}F_{k}^{\text{\text{eq}}}(W),
\end{align*}
(from (\ref{eq:microdual})) and thus, from (\ref{eq:dualfluxrel})
\begin{equation}
Q^{i}(W)=\sum_{k=1}^{n_{v}}V_{k}^{i}F_{k}^{\text{\text{eq}}}(W).\label{eq:consistency}
\end{equation}
In the flux form, we can also write
\[
Q(W,N)=\sum_{k=1}^{n_{v}}(V_{k}\cdot N)F_{k}^{\text{\text{eq}}}(W)
.\]
We recover the relations (\ref{eq:algebraic_consistence}) that impose
the consistency of the kinetic model (\ref{eq:kin_relax}) with the
system of conservation laws (\ref{eq:conslaw}). But now the consistency
derives from an entropy optimization principle. We can sum up the
direct construction of a kinetic model (\ref{eq:kin_relax}) that
is entropy consistent with (\ref{eq:conslaw}):
\begin{enumerate}
\item Compute the Legendre transform $s^{*}$ of the entropy $s$ by (\ref{eq:dualvar}),
(\ref{eq:legendre_transfo}).
\item Compute the dual fluxes $P^{i,\star}$ by (\ref{eq:dualflux}).
\item Find $n_{v}$ strictly convex functions $s_{k}^{*}$ satisfying the
consistency relations written in the dual variables: 
\begin{equation}
\sum_{k=1}^{n_v}s_{k}^{*}=s^{*},\quad\sum_{k=1}^{n_v}V_{k}^{i}s_{k}^{*}=P^{i,\star}.\label{eq:dual_consistency}
\end{equation}
\item The equilibrium is given by
\[
F_{k}^{eq}=\nabla s_{k}^{*}.
\]
\end{enumerate}
\begin{rem}
If $n_{v}\geq d+1$, there is generally at least one solution to the
algebraic system (\ref{eq:dual_consistency}). The difficulty is to
ensure that the $s_{k}^{*}$ are convex. For this property to hold,
the scaling (\ref{eq:scaling}) is useful. It gives an additional
degree of freedom for ensuring convexity.
\end{rem}
\modifa{
\begin{rem}
The main advantage of constructing the kinetic entropy by the dual analysis is theoretical.
In the end the algebraic consistency relations given in (\ref{eq:algebraic_consistence}) are recovered.
The dual analysis ensures that the equilibrium kinetic vector $F_k^{eq}$ corresponds to a minimum of the 
kinetic entropy.
\end{rem}
}

\subsection{Reverse construction}

Now that we have recalled the entropy theory of the kinetic representation,
we can proceed in the reverse way. We \textsl{choose} the equilibrium
$F^{\text{\text{eq}}}$ in such a way that the consistency relation
(\ref{eq:algebraic_consistence}) is satisfied. From the above theory,
we expect that $F_{k}^{\text{\text{eq}}}(W(W^{*}))$ is a gradient,
when it is expressed in the entropy variables $W^{*}$. We can thus
find dual kinetic entropies $s_{k}^{*}(W^{*})$ such that
\[
F_{k}^{\text{\text{eq}}}(W(W^{*}))=\nabla_{W^{*}}s_{k}^{*}(W^{*}).
\]
By Legendre transform, we can (in principle) compute the kinetic entropies
$s_{k}(F_{k})$ and this gives us the microscopic entropy
\[
\sigma(F)=\sum_{k=1}^{n_v}s_{k}(F_{k}).
\]
The main point in the reverse construction is to ensure that the strict
convexity is preserved. In practice, we will see that the microscopic
entropy is convex under a condition that $\lambda$ is large enough.
This will be the subcharacteristic condition. 

\section{Splitting and over-relaxation}

\subsection{Splitting scheme}

In this section we recall how to solve practically in time the kinetic
system (\ref{eq:kin_relax}). Indeed, in (\ref{eq:kin_relax}) all
the transport equations are coupled in a non-linear way. We introduce
a splitting method for separating the transport equations. In this
numerical method, the approximation of the kinetic data is computed
at fixed times $t_{n}=n\Delta t$, where $\Delta t$ is the time step.
The approximation is not continuous at time $t_{n}$, i.e. we distinguish
between the values of $F_{k}(X,t_{n}^{-})$ and of $F_{k}(X,t_{n}^{+})$.
Suppose that we know the kinetic data $F_{k}(X,t_{n-1}^{+})$ at the
end of time step $n-1$. Computing the next time step consists first
in solving the homogenous transport equations
\[
\partial_{t}F_{k}+V_{k}\cdot\nabla F_{k}=0,
\]
for a duration of $\Delta t$, which defines $F_{k}(X,t_{n}^{-}).$
Indeed, using the characteristic method (and avoiding for the moment
difficulties arising from the boundaries), we get the explicit formula
\begin{equation}
F_{k}(X,t_{n}^{-})=F_{k}(X-\Delta tV_{k},t_{n-1}^{+}).\label{eq:characteristic}
\end{equation}
In this way, we obtain the new conservative data by
\[
W(X,t_{n})=\sum_{k=1}^{n_v} F_{k}(X,t_{n}^{-}).
\]
We then apply the following over-relaxation formula
\begin{equation}
F_{k}(X,t_{n}^{+})=\omega F_{k}^{eq}(W(X,t_{n}))+(1-\omega)F_{k}(X,t_{n}^{-}).\label{eq:over_relax}
\end{equation}
Several choices can be made for the relaxation parameter $\omega$.
The choice $\omega=1$ corresponds to a projection of the kinetic
data on the equilibrium at the end of each time step. The choice $\omega=2$
(over-relaxation) is interesting, because, in this case, it can be
shown that the time integration is second-order accurate (see for
instance \cite{coulette2019high} and included references).

\subsection{Operator notations\label{subsec:op_notations}}

The splitting scheme is very simple to implement in a computer program
and no additional information is needed. However, we introduce here
some additional mathematical notations that will be useful for deriving
the equivalent equation analysis.

The interesting output of the kinetic scheme is obviously the conservative
variables vector of size $m$, given by
\[
W=\sum_{k=1}^{n_{v}}F_{k}.
\]
However, the kinetic representation introduces $n_{v}$ kinetic vectors
$F_{k}$ instead of one vector $W$. For the analysis, it is necessary
to supplement $W$ with additional quantities. The equivalent equation
could be written in an arbitrary set of variables $Y\in\mathbb{R}^{mn_{v}}$.
We have done a particular choice, which helps the understanding, to
our opinion.

The first components of $Y$ are made of the conservative variables
$W$. The next components will be made of the flux errors
\[
\sum_{k=1}^{n_v}V_{k}^{i}\left(F_{k}-F_{k}^{eq}\right)=\sum_{k=1}^{n_v}V_{k}^{i}F_{k}-Q^{i},\quad1\leq i\leq d.
\]
If $n_{v}=d+1$, we have enough variables. If $n_{v}>d+1$ we still
have to supplement $Y$ with $n_{c}=(n_{v}-d-1)$ independent linear
combinations of the kinetic data
\[
\sum_{k=1}^{n_v}\beta_{k}^{\ell}\left(F_{k}-F_{k}^{eq}\right),\quad1\leq\ell\leq n_{v}-d-1=n_{c}.
\]
It is convenient to choose the coefficients $\beta_{k}^{\ell}$ of
the linear combinations in order to cancel the contributions of $F^{eq}$
\begin{equation}
\sum_{k=1}^{n_v}\beta_{k}^{\ell}F_{k}^{eq}=0.\label{eq:cancel_feq}
\end{equation}
Finally, $Y$ is of the form
\[
Y=\left(\begin{array}{c}
W\\
\sum_{k}V_{k}^{1}F_{k}\\
\vdots\\
\sum_{k}V_{k}^{d}F_{k}\\
\sum_{k}\beta_{k}^{1}F_{k}\\
\vdots\\
\sum_{k}\beta_{k}^{n_{v}-d-1}F_{k}
\end{array}\right)-\left(\begin{array}{c}
0\\
\sum_{k}V_{k}^{1}F_{k}^{eq}\\
\vdots\\
\sum_{k}V_{k}^{d}F_{k}^{eq}\\
0\\
\vdots\\
0
\end{array}\right).
\]
While the $m$ first components of $Y$ are an approximate solution
to (\ref{eq:conslaw})
\[
Y^{1\cdots m}=W,
\]
we expect the next components to be small 
\[
Y^{m+1\cdots mn_{v}}\simeq0.
\]
In the following, we denote the sub-vector $Y^{m+1\cdots mn_{v}}$
as the \textsl{flux error} (even if the last components of $Y$, as
we have seen above, are not necessarily all related to fluxes).

We can write the $Y$ representation in a matrix form
\begin{equation}
Y(F)=MF-CMF^{eq}(W(F)),\quad W(F)=\sum_{k=1}^{n_v}F_{k}.\label{eq:F_to_Y}
\end{equation}
We call the matrix $M$ the matrix of moments. It is supposed to be
invertible. The matrix $C$ is the diagonal matrix
\[
C=\left(\begin{array}{ccc}
0_{m}\\
 & 1_{md}\\
 &  & 0_{mn_{c}}
\end{array}\right),
\]
where we denote by $1_{r}$ the diagonal identity matrix of size $r\times r$
and by $0_{r}$ the null matrix of size $r\times r$. The nonlinear
mapping $F\mapsto Y$ can be inverted explicitly
\begin{equation}
F(Y)=M^{-1}Y+M^{-1}CMF^{eq}(W(Y)),\quad W(Y)=(Y{{}^1},\ldots,Y^{m})^{\intercal}.\label{eq:Y_to_F}
\end{equation}
The notations are tedious but simple. For making them clearer, let
us consider three examples that we will use below.

\subsubsection{D1Q2 case}

In this case, the space dimension $d=1$ and we take two kinetic velocities
\[
V_{1}=-\lambda,\quad V_{2}=\lambda.
\]
Because $n_{v}=d+1$ and from the consistency relation (\ref{eq:consistency}),
the equilibrium is necessarily given by
\[
F_{1}^{eq}(W)=\frac{1}{2}W-\frac{1}{2\lambda}Q(W),\quad F_{2}^{eq}(W)=\frac{1}{2}W+\frac{1}{2\lambda}Q(W).
\]
Then 
\[
M=\left(\begin{array}{cc}
1_{m} & 1_{m}\\
-\lambda1_{m} & \lambda1_{m}
\end{array}\right).
\]

\subsubsection{D2Q3 case}

In this case, the space dimension $d=2$ and we take $n_{v}=3$ kinetic
velocities
\[
V_{1}=\lambda\left(\begin{array}{c}
1\\
0
\end{array}\right),\quad V_{2}=\frac{\lambda}{2}\left(\begin{array}{c}
-1\\
\sqrt{3}
\end{array}\right),\quad V_{3}=\frac{\lambda}{2}\left(\begin{array}{c}
-1\\
-\sqrt{3}
\end{array}\right).
\]
Because of $n_{v}=d+1$ the equilibrium is uniquely defined. It is
given by
\[
F^{eq}=M^{-1}\left(\begin{array}{c}
W\\
Q^{1}\\
Q^{2}
\end{array}\right),
\]
with 
\[
M=\left(\begin{array}{ccc}
1_{m} & 1_{m} & 1_{m}\\
\lambda1_{m} & -\frac{1}{2}\lambda1_{m} & -\frac{\lambda}{2}1_{m}\\
0_{m} & \frac{\sqrt{3}}{2}\lambda1_{m} & -\frac{\sqrt{3}}{2}\lambda1_{m}
\end{array}\right).
\]
We find
\[
F_{k}^{eq}=\frac{W}{3}+\frac{2}{3\lambda^{2}}V_{k}\cdot\left(\begin{array}{c}
Q^{1}\\
Q^{2}
\end{array}\right).
\]

\subsubsection{D2Q4 case}

In this case, the space dimension $d=2$ and we take four kinetic
velocities
\[
V_{1}=\lambda\left(\begin{array}{c}
1\\
0
\end{array}\right),\quad V_{2}=\lambda\left(\begin{array}{c}
-1\\
0
\end{array}\right),\quad V_{3}=\lambda\left(\begin{array}{c}
0\\
1
\end{array}\right),\quad V_{4}=\lambda\left(\begin{array}{c}
0\\
-1
\end{array}\right).
\]
Because $n_{v}>d+1$, we have one degree of freedom for the computation
of the equilibrium. We can take, by analogy with the D1Q2 model:
\[
F_{1,2}^{eq}(W)=\modifa{\frac{1}{4}}W\pm\frac{1}{2\lambda}Q^{1}(W),\quad F_{3,4}^{eq}(W)=\modifa{\frac{1}{4}}W\pm\frac{1}{2\lambda}Q^{2}(W).
\]
Then the matrix of moments is given by 
\[
M=\left(\begin{array}{cccc}
1_{m} & 1_{m} & 1_{m} & 1_{m}\\
\lambda1_{m} & -\lambda1_{m} & 0_{m} & 0_{m}\\
0_{m} & 0_{m} & \lambda1_{m} & -\lambda1_{m}\\
\lambda^{2}1_{m} & \lambda^{2}1_{m} & -\lambda^{2}1_{m} & -\lambda^{2}1_{m}
\end{array}\right).
\]
For the D2Q4 model, other choices are possible for the last row of
the matrix of moments. We have made the choice proposed in \cite{fevrier2014extension},
but we could have replaced $\lambda^{2}$ by one, for instance. Actually,
we could have supplemented by any row that ensures that $M$ is invertible,
but then the condition (\ref{eq:cancel_feq}) is generally not satisfied.

\subsubsection{Transport operator in the $Y$ variables}

With these notations, we can rewrite the transport operator in the
$Y$ variables. Let us define the $\tau_{k}(\Delta t)$ shift operators,
applied on an arbitrary function $X\mapsto f(X)$ by
\[
(\tau_{k}(\Delta t)f)(X)=f(X-\Delta tV_{k}).
\]
The vectorial shift operator is then given by
\[
\tau(\Delta t)=\left(\begin{array}{ccc}
\tau_{1}(\Delta t)1_{m} &  & 0\\
 & \ddots\\
0 &  & \tau_{n_{v}}(\Delta t)1_{m}
\end{array}\right).
\]
The initial data $Y(\cdot,t_{n-1}^{+})$ at the end of the time step $n-1$
are supposed to be known. The transport map $\mathcal{T}(\Delta t)$
is the procedure that takes $Y(\cdot,t_{n-1}^{+})$ and produces $Y(\cdot,t_{n}^{-})$
before the relaxation step:
\[
Y(\cdot,t_{n}^{-})=\mathcal{T}(\Delta t)Y(\cdot,t_{n-1}^{+}).
\]
The procedure is as follows:
\begin{enumerate}
\item Express the initial data $Y$ in the kinetic variables thanks to (\ref{eq:Y_to_F})
\[
F(\cdot,t_{n-1}^{+})=F(Y(\cdot,t_{n-1}^{+})).
\]
\item Apply the transport operator
\[
F(\cdot,t_{n}^{-})=\tau(\Delta t)F(\cdot,t_{n-1}^{+}).
\]
\item Go back to the $Y$ variables thanks to (\ref{eq:F_to_Y})
\[
Y(\cdot,t_{n}^{-})=Y(F(\cdot,t_{n}^{-})).
\]
\end{enumerate}
The transport is nonlinear functional operator, made of shift operators
and nonlinear algebraic operations. We do not give all the details. 

\subsubsection{Relaxation operator in the $Y$ variables}

The relaxation operator in the kinetic variables is given by (\ref{eq:over_relax}).
In vectorial form it reads
\begin{equation}
F(\cdot,t_{n}^{+})=\omega F^{eq}(W(\cdot,t_{n}^{-}))+(1-\omega)F(\cdot,t_{n}^{-}).\label{eq:vec_over_relax}
\end{equation}
Using the fact that $W=\sum_{k}F_{k}$ does not change during the
relaxation, we simply obtain
\begin{equation}
Y(\cdot,t_{n}^{+})=\mathcal{R}_{\omega}Y(\cdot,t_{n}^{-}),\label{eq:relax_Y}
\end{equation}
where the relaxation operator $\mathcal{R}_{\omega}$ is the diagonal
operator
\[
\mathcal{R}_{\omega}=\left(\begin{array}{cc}
1_{m} & 0\\
0 & (1-\omega)1_{m(n_{v}-1)}
\end{array}\right).
\]
In other words, the relaxation operator leaves the conservative variables
unchanged:
\[
\left(\mathcal{R}_{\omega}Y\right)^{i}=Y^{i}=W^{i},\quad1\leq i\leq m,
\]
and the other components of $Y$ are multiplied by $1-\omega$:
\[
\left(\mathcal{R}_{\omega}Y\right)^{i}=(1-\omega)Y^{i},\quad i>m.
\]
When $\omega\simeq2$, $1-\omega\simeq-1$ and thus, these components
are numerically fluctuating around $0$ at each time iteration. There
is no damping of the oscillations when $\omega=2$. The frequency
of the fluctuations is $1/\Delta t$ and tends to infinity when $\Delta t$
tends to zero.

\subsubsection{Formal numerical scheme}

Now that we have introduced the operator notation, we can give a more
formal definition of the split kinetic approximation. For solving
(\ref{eq:conslaw}) with the initial data
\[
W(X,0)=W_{0}(X),
\]
we start with
\[
Y_{0}^{+}=\left(\begin{array}{c}
W_{0}(X)\\
0
\end{array}\right).
\]
Assume that we have reached $Y_{n-1}^{+}$ at time step $n-1$. Then
the next value is given by
\[
Y_{n}^{+}=\mathcal{S}(\Delta t)Y_{n-1}^{+},
\]
where the split  kinetic operator is 
\begin{equation}\label{eq:split_nsym}
\mathcal{S}(\Delta t)=\mathcal{R}_{\omega}\circ\mathcal{T}(\Delta t).
\end{equation}
Then we extract the first $m$ components of $Y_{n}^{+}$ and we expect
to obtain an approximation of $W$ at time $n$:
\[
Y_{n}^{+}(X)=\left(\begin{array}{c}
W_{n}(X)\\
\vdots
\end{array}\right),\quad W(X,t_{n})\simeq W_{n}(X).
\]
As stated above, the relaxation operator generates time fluctuations
with frequency $1/\Delta t$. For the analysis, it is convenient to
suppress these non-relevant time fluctuations. This is done by considering 
only an even number of time steps of (\ref{eq:split_nsym}) for instance.
\modifa{Another point is that the split operator (\ref{eq:split_nsym}) is not symmetric in time.
In order to remove the fast time oscillations and the lack of symmetry, we can consider
the modified split kinetic operator}
\begin{equation}
\mathcal{S}(\Delta t)=\mathcal{T}\left(\frac{\Delta t}{4}\right)\circ\mathcal{R}_{\omega}\circ\mathcal{T}\left(\frac{\Delta t}{2}\right)\circ\mathcal{R}_{\omega}\circ\mathcal{T}\left(\frac{\Delta t}{4}\right).\label{eq:sym_split}
\end{equation}
In the case $\omega=2$, this modified operator is indeed time symmetric
in the sense that
\begin{equation}
\mathcal{S}(0)=I_{d},\quad\mathcal{S}(\Delta t)^{-1}=\mathcal{S}(-\Delta t).\label{eq:sym_prop}
\end{equation}
This is this property that ensures that the split scheme is second
order accurate when $\omega=2$. We refer, for instance to \cite{coulette2019high}
and to the consistency analysis given below.
\modifa{The symmetric operator and the non-symmetric operators sampled at the even time steps are similar.
They differ in the initial and final steps by a term of order $\Delta t$. This difference has in practice very few effect 
on the order of the scheme on the conservative variables (see Section 6.1.1 in \cite{coulette2019high}).}

\section{Applications}

In this section, we apply the entropy theory to practical examples.

\subsection{Application to the transport equation \label{subsec:trans_example}}

We first consider the construction of the D1Q2 model for the one-dimensional
transport equation
\[
\partial_{t}W+c\partial_{x}W=0,
\]
where the velocity $c$ is supposed to be constant and $X=x$ is the
one-dimensional space variable. In this case
\[
Q(W)=cW,\quad V_{1}=-\lambda,\quad V_{2}=\lambda,
\]
and we have no free choice for choosing the equilibrium kinetic data,
which are given by
\[
F_{1}^{\text{\text{eq}}}(W)=\frac{W}{2}-\frac{cW}{2\lambda},\quad F_{2}^{\text{\text{eq}}}(W)=\frac{W}{2}+\frac{cW}{2\lambda}.
\]
For this simple linear conservation law we can take the entropy associated
to the $L^{2}$ norm
\[
s(W)=\frac{W^{2}}{2}.
\]
The dual entropy is simply
\[
s^{*}(W^{*})=\frac{W^{*}{}^{2}}{2},
\]
and the entropy variable is 
\[
W^{*}=\nabla_{W}s(W)=W.
\]
Thus 
\[
F_{1}^{\text{\text{eq}}}(W(W^{*}))=\frac{W^{*}}{2}-\frac{cW^{*}}{2\lambda},\quad F_{2}^{\text{\text{eq}}}(W(W^{*}))=\frac{W^{*}}{2}+\frac{cW^{*}}{2\lambda}.
\]
From (\ref{eq:microdual}) we deduce the dual kinetic entropies
\[
s_{1}^{*}(W^{*})=\frac{1}{4}(1-c/\lambda)W^{*}{}^{2},\quad s_{2}^{*}(W^{*})=\frac{1}{4}(1+c/\lambda)W^{*}{}^{2}.
\]
They are strictly convex under the subcharacteristic condition
\begin{equation}
\lambda>\left|c\right|.\label{eq:trans_sub}
\end{equation}
We can then compute the kinetic entropies
\[
s_{1}(F_{1})=\frac{\lambda}{\lambda-c}F_{1}^{2},\quad s_{2}(F_{2})=\frac{\lambda}{\lambda+c}F_{2}^{2}.
\]
The microscopic entropy is then
\[
\sigma(F_{1},F_{2})=\frac{\lambda}{\lambda-c}F_{1}^{2}+\frac{\lambda}{\lambda+c}F_{2}^{2}.
\]
As expected, it is a diagonal quadratic form in the $(F_{1},F_{2})$
variables.

Let us express the microscopic entropy with respect to the $Y=(W,y)^{\intercal}$
variables. We have
\[
W=F_{1}+F_{2},
\]
and 
\begin{align*}
y&=-\lambda F_{1}+\lambda F_{2}-Q(W),
\\
&=-\lambda F_{1}+\lambda F_{2}-c(F_{1}+F_{2}).
\end{align*}
After simple computations, we find that the microscopic entropy is
also
\begin{equation}
\widetilde{\sigma}(W,y)=\sigma(F_{1},F_{2})=\frac{W^{2}}{2}+\frac{y^{2}}{2(\lambda^{2}-c^{2})}.\label{eq:sigma_wy}
\end{equation}
It is a convex function of $W$ and $y$ under condition (\ref{eq:trans_sub}).
As expected, it is minimal when the flux error $y$ vanishes. In addition, in the relaxation step, the entropy is exactly conserved when $\omega=2$
because
\begin{equation}
\widetilde{\sigma}(W,(1-\omega)y)=\widetilde{\sigma}(W,-y)=\widetilde{\sigma}(W,y).\label{eq:entrop_sym_y}
\end{equation}

We are now in a position to prove the entropy stability of the over-relaxation
scheme when $1\leq\omega\leq2$.
\begin{thm}
With periodic boundary conditions, or in an infinite domain, the over-relaxation
scheme is entropy stable under the sub-characteristic condition (\ref{eq:trans_sub})
when $1\leq\omega\leq2$.
\end{thm}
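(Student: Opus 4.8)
The plan is to exhibit the spatially integrated microscopic entropy
$\mathcal{E}(Y)=\int\widetilde{\sigma}(W(x),y(x))\,dx$ (a sum over grid points in the fully discrete setting, an integral over $x$ in the semi-discrete one) as a Lyapunov functional that does not increase along one time step of the split scheme $\mathcal{S}(\Delta t)=\mathcal{R}_{\omega}\circ\mathcal{T}(\Delta t)$. Since $\mathcal{S}$ is a composition of the transport map $\mathcal{T}(\Delta t)$ and the relaxation map $\mathcal{R}_{\omega}$, it suffices to show that each of the two steps is entropy non-increasing; the conclusion then follows by composition, at every step (so the ``even number of steps'' reduction is not needed here), and it applies verbatim to the symmetrized operator (\ref{eq:sym_split}), which is again just a composition of transport and relaxation sub-steps.

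\emph{Transport step.} Here I would go back to the diagonal kinetic variables $F=(F_{1},F_{2})$, in which the microscopic entropy is the decoupled sum $\sigma(F_{1},F_{2})=\frac{\lambda}{\lambda-c}F_{1}^{2}+\frac{\lambda}{\lambda+c}F_{2}^{2}$ and $\mathcal{T}(\Delta t)$ acts as the pure shifts $F_{k}(x)\mapsto F_{k}(x-\Delta t V_{k})$, $k=1,2$. On a periodic domain, or on the whole line with sufficient decay, a shift preserves the integral of any function of $F_{k}$, so $\int s_{k}(F_{k}(x-\Delta t V_{k}))\,dx=\int s_{k}(F_{k}(x))\,dx$, whence $\int\sigma(F)\,dx$ is \emph{exactly} conserved by the transport. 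Using the pointwise identity $\widetilde{\sigma}(Y(F))=\sigma(F)$ recorded in (\ref{eq:sigma_wy}), this gives $\mathcal{E}(\mathcal{T}(\Delta t)Y)=\mathcal{E}(Y)$. Note that the subcharacteristic condition (\ref{eq:trans_sub}) is precisely what makes the coefficients $\lambda/(\lambda\mp c)$ positive, so that $\sigma$, equivalently $\widetilde{\sigma}$, is a genuine (strictly) convex entropy.

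\emph{Relaxation step.} The operator $\mathcal{R}_{\omega}$ acts pointwise in $x$, leaves $W$ unchanged, and sends $y\mapsto(1-\omega)y$, so it is enough to establish the pointwise inequality $\widetilde{\sigma}(W,(1-\omega)y)\leq\widetilde{\sigma}(W,y)$. For fixed $W$, $\widetilde{\sigma}(W,y)=\frac{W^{2}}{2}+\frac{y^{2}}{2(\lambda^{2}-c^{2})}$ is an even convex quadratic in $y$ with minimum at $y=0$, hence $\alpha\mapsto\widetilde{\sigma}(W,\alpha y)$ is nondecreasing in $|\alpha|$; since $1\leq\omega\leq2$ gives $|1-\omega|\leq1$ (only the upper bound $\omega\leq2$ is used for this estimate), the inequality follows, with equality exactly when $\omega=2$, which is (\ref{eq:entrop_sym_y}). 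Integrating in $x$ yields $\mathcal{E}(\mathcal{R}_{\omega}Y)\leq\mathcal{E}(Y)$.

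Combining the two steps, $\mathcal{E}(Y_{n}^{+})=\mathcal{E}(\mathcal{R}_{\omega}\mathcal{T}(\Delta t)Y_{n-1}^{+})\leq\mathcal{E}(\mathcal{T}(\Delta t)Y_{n-1}^{+})=\mathcal{E}(Y_{n-1}^{+})$, so the discrete entropy is non-increasing in $n$; together with the strict convexity of $\widetilde{\sigma}$ under (\ref{eq:trans_sub}), this gives an a priori bound on $(W_{n},y_{n})$, i.e. entropy stability in the claimed sense. The argument is essentially bookkeeping: the only load-bearing points are (i) that the transport step is an isometry for $\int\sigma$ \emph{because} the periodic/infinite-domain hypothesis makes the characteristic shifts measure preserving — this is where that hypothesis enters — and (ii) the convex quadratic structure of $\widetilde{\sigma}$ in $y$, which reduces the relaxation estimate to $|1-\omega|\leq1$. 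I do not expect a real obstacle beyond keeping the diagonal-$F$ and the $(W,y)$ descriptions of the same functional carefully aligned.
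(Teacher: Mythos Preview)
Your proof is correct and follows essentially the same two-step argument as the paper: exact conservation of $\int\sigma(F)\,dx$ under the characteristic shifts (using the periodic/infinite-domain hypothesis), followed by the pointwise relaxation inequality $\widetilde{\sigma}(W,(1-\omega)y)\leq\widetilde{\sigma}(W,y)$ deduced from $|1-\omega|\leq1$ and the explicit quadratic form~(\ref{eq:sigma_wy}). The only inaccuracy is the parenthetical that ``only the upper bound $\omega\leq2$ is used'': the bound $|1-\omega|\leq1$ also needs $\omega\geq0$, though this is of course implied by the hypothesis $\omega\geq1$.
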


\begin{proof}
It is sufficient to prove the decrease of the entropy
\[
\mathcal{S}(t)=\int_{x}\sigma(F_{1}(x,t),F_{2}(x,t))=\int_{x}s_{1}(F_{1})+s_{2}(F_{2}),
\]
for a single time step. In the transport step, one solves
\[
\partial_{t}F_{k}+V_{k}\partial_{x}F_{k}=0,
\]
and thus the microscopic entropies
\[
\overline{s}_{k}(t)=\int_{x}s_{k}(F_{k}),
\]
are separately conserved
\[
\overline{s}_{k}(t+\Delta t^{-})=\overline{s}_{k}(t^{+}).
\]
In the relaxation step, $W$ is not changed and 
\[
y(x,t+\Delta t^{+})=(1-\omega)y(x,t+\Delta t^{-}),
\]
because $\left|1-\omega\right|\leq1$ we see from the expression (\ref{eq:sigma_wy})
of the entropy in the $(W,y)$ variable that the microscopic entropy
decreases pointwise, at each $x$. Therefore,
\[
\mathcal{S}(t+\Delta t^{+})\leq\mathcal{S}(t+\Delta t^{-}).
\]
\end{proof}

\subsection{Application to the shallow water equations}

In order to show that the approach still works for non-linear systems
of conservation laws, we try now to apply the above method to the
shallow water model where the unknowns are the water height $h(x,t)$
and the velocity $u(x,t).$ It reads
\[
\partial_{t}W+\partial_{x}Q(W)=0,
\]
with
\[
W=\left(\begin{array}{c}
h\\
hu
\end{array}\right),\quad Q(W)=\left(\begin{array}{c}
hu\\
hu^{2}+gh^{2}/2
\end{array}\right).
\]
We define the primitive variables
\[
v=\left(\begin{array}{c}
h\\
u
\end{array}\right).
\]
 For smooth solutions, we also have 
\[
\partial_{t}v+B(v)\partial_{x}v=0,
\]
with 
\[
B(v)=\left(\begin{array}{cc}
u & h\\
g & u
\end{array}\right).
\]
Assume that the Lax entropy $s(W)=H(v)$ is expressed in the primitive
variables, and that the entropy flux $G(W)=R(v)$. Then we must have
\[
D_{v}H(v)B(v)=D_{v}R(v).
\]
Denoting the partial derivatives with indices we obtain
\[
\left(\begin{array}{cc}
H_{h} & H_{u}\end{array}\right)\left(\begin{array}{cc}
u & h\\
g & u
\end{array}\right)=\left(\begin{array}{cc}
R_{h} & R_{u}\end{array}\right).
\]
We search $H$ under the form
\[
H(h,u)=h\frac{u^{2}}{2}+e(h).
\]
 Because 
\[
H_{h}=\frac{u^{2}}{2}+e'(h),\quad H_{u}=hu,
\]
this gives
\[
\frac{u^{3}}{2}+ue'+ghu=R_{h},\quad\frac{3hu^{2}}{2}+he'=R_{u}.
\]
We take
\[
R=h\frac{u^{3}}{2}+ue+gu\frac{h^{2}}{2}.
\]
Then 
\[
R_{u}=\frac{3hu^{2}}{2}+e+g\frac{h^{2}}{2}=\frac{3hu^{2}}{2}+he'.
\]
$e(h)$ is then solution of the differential equation
\[
e-he'+gh^{2}/2=0.
\]
A solution is 
\[
e(h)=\frac{gh^{2}}{2}.
\]
In the end, we find 
\[
s(W)=h\frac{u^{2}}{2}+\frac{gh^{2}}{2},\quad G(W)=h\frac{u^{3}}{2}+ugh^{2}.
\]
This allows us to compute the entropy variables
\begin{equation}
W_{1}^{*}=gh-\frac{u^{2}}{2},\quad W_{2}^{*}=u,\label{eq:p_to_v}
\end{equation}
and the reverse change of variables
\[
h=\frac{2W_{1}^{*}+W_{2}^{*}{}^{2}}{2g},\quad u=W_{2}^{*}.
\]
The equilibrium kinetic vectors are given by
\[
F_{1}^{\text{\text{eq}}}=\frac{W}{2}-\frac{Q(W)}{2\lambda},\quad F_{2}^{\text{\text{eq}}}=\frac{W}{2}+\frac{Q(W)}{2\lambda}.
\]
After some calculations, we can express this equilibrium in the entropy
variables
\[
F_{1}^{\text{\text{eq}}}=\left[\begin{array}{cc}
\frac{\left(\mathit{W_{2}^{*}}^{2}+2\mathit{W_{1}^{*}}\right)\left(\lambda-\mathit{W_{2}^{*}}\right)}{4g\lambda} & -\frac{\left(\mathit{W_{2}^{*}}^{2}+2\mathit{W_{1}^{*}}\right)\left(-4\mathit{W_{2}^{*}}\lambda+5\mathit{W_{2}^{*}}^{2}+2\mathit{W_{1}^{*}}\right)}{16g\lambda}\end{array}\right]^{\intercal},
\]
\[
F_{2}^{\text{\text{eq}}}=\left[\begin{array}{cc}
\frac{\left(\mathit{W_{2}^{*}}^{2}+2\mathit{W_{1}^{*}}\right)\left(\lambda+\mathit{W_{2}^{*}}\right)}{4g\lambda} & \frac{\left(\mathit{W_{2}^{*}}^{2}+2\mathit{W_{1}^{*}}\right)\left(4\mathit{W_{2}^{*}}\lambda+5\mathit{W_{2}^{*}}^{2}+2\mathit{W_{1}^{*}}\right)}{16g\lambda}\end{array}\right]^{\intercal}.
\]
From the above theory, we know that 
\[
F_{k}^{\text{\text{eq}}}=\nabla_{W^{*}}s_{k}^{*},
\]
for some dual kinetic entropies $s_{k}^{*}$. This is indeed the case
and after more calculations we find
\[
s_{1}^{*}=\frac{\left(\lambda-\mathit{W_{2}^{*}}\right)\left(\mathit{W_{2}^{*}}^{2}+2\mathit{W_{1}^{*}}\right)^{2}}{16g\lambda},\quad s_{2}^{*}=\frac{\left(\mathit{W_{2}^{*}}^{2}+2\mathit{W_{1}^{*}}\right)^{2}\left(\lambda+\mathit{W_{2}^{*}}\right)}{16g\lambda}.
\]
It is then possible to compute the Hessians of $s_{k}^{*}$ and express
them in the $(h,u)$ variables with (\ref{eq:p_to_v}). We find
\[
D_{W^{*}W^{*}}s_{1}^{*}=\left[\begin{array}{cc}
\frac{\lambda-u}{2g\lambda} & \frac{-gh+\lambda u-u^{2}}{2g\lambda}\\
\frac{-gh+\lambda u-u^{2}}{2g\lambda} & \frac{\left(gh+u^{2}\right)\lambda-3hug-u^{3}}{2g\lambda}
\end{array}\right],
\]
\[
D_{W^{*}W^{*}}s_{2}^{*}=\left[\begin{array}{cc}
\frac{\lambda+u}{2g\lambda} & \frac{gh+\lambda u+u^{2}}{2g\lambda}\\
\frac{gh+\lambda u+u^{2}}{2g\lambda} & \frac{\left(gh+u^{2}\right)\lambda+3hug+u^{3}}{2g\lambda}
\end{array}\right].
\]
The two matrices are positive definite iff the first diagonal terms
and the determinants are positive. This is equivalent to
\[
\lambda>\left|u\right|,
\]
\[
(\lambda-u)^{2}-gh>0,\quad(\lambda+u)^{2}-gh>0,
\]
which is again equivalent to
\[
\lambda>\left|u\right|+\sqrt{gh}.
\]
This is the expected sub-characteristic condition. It is difficult
to go farther because the Legendre transforms $s_{1}$ and $s_{2}$
of $s_{1}^{*}$ and $s_{2}^{*}$ cannot be computed explicitly. However,
we can reproduce the stability property of the linear case. The microscopic
entropy is given by
\[
\sigma(F_{1},F_{2})=s_{1}(F_{1})+s_{2}(F_{2}).
\]
Using the relations
\[
W=F_{1}+F_{2},
\]
\[
y=-\lambda F_{1}+\lambda F_{2}-Q(F_{1}+F_{2}),
\]
we deduce
\[
F_{1}=\frac{W}{2}-\frac{Q(W)}{2\lambda}-\frac{y}{2\lambda},\quad F_{2}=\frac{W}{2}+\frac{Q(W)}{2\lambda}+\frac{y}{2\lambda},
\]
and the microscopic entropy can be expressed in function of $W$ and
$y$
\[
\widetilde{\sigma}(W,y)=s_{1}\left(\frac{W}{2}-\frac{Q(W)}{2\lambda}-\frac{y}{2\lambda}\right)+s_{2}\left(\frac{W}{2}+\frac{Q(W)}{2\lambda}+\frac{y}{2\lambda}\right).
\]
For a fixed $W$ the minimum is achieved for $y=0$, therefore the
macroscopic entropy is
\[
s(W)=\widetilde{\sigma}(W,0),
\]
and
\[
\nabla_{y}\widetilde{\sigma}(W,0)=0.
\]
Then, with a Taylor expansion near to $y=0$, we get
\[
\widetilde{\sigma}(W,y)=\widetilde{\sigma}(W,-y)+O(\left|y\right|^{3}).
\]
The relation (\ref{eq:entrop_sym_y}) thus still holds but with a
third-order term in $y$. This means that the relaxation scheme with
$\omega=2$ is entropy preserving up to third order in $y$. In principle,
it is also possible to construct a scheme that preserves exactly the
entropy in the non-linear case. It is sufficient to choose the relaxation
parameter $\omega=\omega(W,y)$ in such way that
\begin{equation}
\widetilde{\sigma}(W,(1-\omega(W,y))y)=\widetilde{\sigma}(W,y).\label{eq:same_entrop}
\end{equation}
In practice, this would not be very interesting, one would get
\[
\omega(W,y)\simeq2
\]
 and $\omega(W,y)$ would have to be computed numerically by first
computing $s_{1}$ and $s_{2}$numerically and then by solving (\ref{eq:same_entrop})
also numerically.

What is interesting, however, is that the reasoning ensures the existence
of a relaxation parameter $\omega(W,y)\simeq2$, such that the whole
scheme is entropy preserving. And if the scheme is run with a smaller
relaxation parameter, it is ensured to be entropy stable.
\modifa{Similar ideas for controlling entropy dissipation in the over-relaxation step have been developed by Karlin in the standard LBM for Navier-Stokes.
See \cite{karlin1999perfect} or \cite{hosseini2023entropic} for a recent review article. See also \cite{brownlee2007stability}}.

\subsection{Application to isothermal Euler equations}

Finally, we also apply the theory to the isothermal Euler model, which
reads
\[
\partial_{t}W+\partial_{x}Q(W)=0,
\]
with 
\[
W=\left(\begin{array}{c}
\rho\\
\rho u
\end{array}\right),\quad Q(W)=\left(\begin{array}{c}
\rho u\\
\rho u^{2}+c^{2}\rho
\end{array}\right).
\]
In primitive variables $v=(\rho,u)^{\intercal}$, the system reads
\[
\partial_{t}v+B(v)\partial_{x}v=0,
\]
with
\[
B(v)=\left(\begin{array}{cc}
u & \rho\\
\frac{c^{2}}{\rho} & u
\end{array}\right).
\]
First, let us find a Lax entropy $H(v)$ and an entropy flux $R(v)$.
We must have
\[
(H_{\rho},H_{u})\left(\begin{array}{cc}
u & \rho\\
\frac{c^{2}}{\rho} & u
\end{array}\right)=(R_{\rho},R_{u}).
\]
We search for $H(v)$ in the form
\[
H=\rho\frac{u^{2}}{2}+e(\rho).
\]
Then
\[
R_{\rho}=\frac{u^{3}}{2}+e'(\rho)u+c^{2}u,
\]
\[
R_{u}=\rho\frac{3u^{2}}{2}+e'(\rho)\rho.
\]
Integrating the first equation with respect to $\rho$ we get
\[
R=\rho\frac{u^{3}}{2}+e(\rho)u+\rho c^{2}u+C(\rho),
\]
(we can take $C(\rho)=0$). And deriving with respect to $u$ we get
\[
\rho\frac{3u^{2}}{2}+e'(\rho)\rho=\rho\frac{3u^{2}}{2}+e(\rho)+\rho c^{2}.
\]
Therefore $e(\rho)$ is a solution of the differential equation
\[
e'(\rho)\rho=e(\rho)+\rho c^{2}.
\]
 We can take
\[
e(\rho)=c^{2}\rho(\ln\rho-1).
\]
Finally
\[
s(W)=\rho\frac{u^{2}}{2}+c^{2}\rho(\ln\rho-1),\quad G(W)=u(s(W)+c^{2}\rho).
\]
The entropy variables are
\[
W_{1}^{*}=-\frac{u^{2}}{2}+c^{2}\ln\rho,\quad W_{2}^{*}=u.
\]
The reverse change of variables is
\[
\rho=W_{1}=\exp\left(\frac{2W_{1}^{*}+W_{2}^{*}{}^{2}}{2c^{2}}\right),\quad\rho u=W_{2}^{*}\exp\left(\frac{2W_{1}^{*}+W_{2}^{*}{}^{2}}{2c^{2}}\right).
\]
The equilibrium distribution is
\[
F_{1}^{\text{\text{eq}}}=\frac{W}{2}-\frac{Q(W)}{2\lambda}=\frac{1}{2}\left(\begin{array}{c}
W_{1}-W_{2}/\lambda\\
W_{2}-(W_{2}^{2}/W_{1}+c^{2}W_{1})/\lambda
\end{array}\right).
\]
\[
F_{2}^{\text{\text{eq}}}=\frac{W}{2}+\frac{Q(W)}{2\lambda}=\frac{1}{2}\left(\begin{array}{c}
W_{1}+W_{2}/\lambda\\
W_{2}+(W_{2}^{2}/W_{1}+c^{2}W_{1})/\lambda
\end{array}\right).
\]
In the dual variables we get
\[
\nabla s_{1}^{*}=\frac{\exp\left(\frac{2W_{1}^{*}+W_{2}^{*}{}^{2}}{2c^{2}}\right)}{2\lambda}\left(\begin{array}{c}
\lambda-W_{2}^{*}\\
-W_{2}^{*}{}^{2}+\lambda W_{2}^{*}-c^{2}
\end{array}\right),
\]
\[
\nabla s_{2}^{*}=\frac{\exp\left(\frac{2W_{1}^{*}+W_{2}^{*}{}^{2}}{2c^{2}}\right)}{2\lambda}\left(\begin{array}{c}
\lambda+W_{2}^{*}\\
W_{2}^{*}{}^{2}+\lambda W_{2}^{*}+c^{2}
\end{array}\right),
\]
and finally
\[
s_{1}^{*}=\frac{\exp\left(\frac{2W_{1}^{*}+W_{2}^{*}{}^{2}}{2c^{2}}\right)}{2\lambda}(\lambda-W_{2}^{*}),\quad s_{2}^{*}=\frac{\exp\left(\frac{2W_{1}^{*}+W_{2}^{*}{}^{2}}{2c^{2}}\right)}{2\lambda}(\lambda+W_{2}^{*}).
\]
With similar calculations as for the shallow water system we find
the following sub-characteristic condition
\[
\lambda>c+\left|u\right|.
\]
As for the shallow water system, it is difficult to compute explicitly
$s_{1}$ and $s_{2}$.

\section{Equivalent equation analysis}

The entropy analysis of the over-relaxation scheme ensures the stability
of the scheme as soon as the sub-characteristic condition is satisfied.
However, for the moment it is not obvious that the scheme provides
an approximation of the system (\ref{eq:conslaw}). Indeed, the consistency
of the kinetic approximation with the system of conservation laws
is ensured, as soon as 
\[
F(X,t)\simeq F^{eq}(W(X,t)),
\]
and the over-relaxation formula (\ref{eq:vec_over_relax}) enforces
$F=F^{eq}$ at the end of the time step only when $\omega=1$. 

The objective of the equivalent system analysis is to provide a consistency
theory in the case $1<\omega\leq2$. This consistency analysis is
based on two ingredients: a Taylor expansion followed by a Chapman-Enskog
analysis.

\subsection{Taylor expansion: equivalent system in \(Y\)}

The first ingredient is a Taylor expansion of
$$
\frac{\mathcal{S}(\Delta t)-\mathcal{S}^{-1}(\Delta t)}{2\Delta t}
$$
when $\Delta t$ tends to zero. The Taylor
expansion provides an equivalent system of Partial Differential Equations
(PDE) expressed on $Y$. This system involves the whole vector $Y$,
which contains both $W$ and the flux error. We denote it by the \textsl{equivalent
system} in the following. It takes the general form
\begin{equation}
\partial_{t}Y+\frac{r(\omega)}{\Delta t}\left(\begin{array}{c}
0\\
Y^{m+1\cdots}
\end{array}\right)+\sum_{1\leq i\leq d}A^{i}(Y,\omega)\partial_{i}Y-\Delta t\sum_{1\leq i,j\leq d}B^{i,j}(Y,\omega)\partial_{i,j}Y=O(\Delta t^{2}).
\label{eq:equiv_system_Y}
\end{equation}
In the case $\omega=2$, the formula is simpler and we shall find
that
\begin{equation}
r(2)=0,\quad B^{i,j}(Y,2)=0,\label{eq:om2_rB}
\end{equation}
and that the matrices $A^{i}$ are of the form
\begin{equation}
A^{i}(Y,2)=\left(\begin{array}{cc}
D_{W}Q^{i}(W) & 0_{m}\\
0_{m(n_{v}-1)} & \times
\end{array}\right).\label{eq:om2_A}
\end{equation}
The Taylor expansion is thus sufficient to get the second-order time
consistency of the split scheme with the initial system of conservation law
when $\omega=2$. This consistency holds even when the flux
error is large. In addition, up to third order terms, the evolution of $W$ is uncoupled from the
evolution of the flux error. This surprising result relies essentially on the
symmetry property (\ref{eq:sym_prop}), which ensures that the stiff
terms in $\Delta t^{-1}$ vanish in the Taylor expansion.

\subsection{Asymptotic analysis: equivalent equation in \(W\)}

In the case $\omega \neq 2$, the equivalent system contains stiff terms.
And the evolution of $W$ is no more uncoupled from the evolution of the
flux error.
In order to remove this coupling, the second step is to perform an additional
asymptotic analysis (similar to the Hilbert or Chapman-Enskog expansion),
with the assumption that the flux error is of order $O(\Delta t)$.
From (\ref{eq:equiv_system_Y}) we can deduce an algebraic relation between
the flux error $Y^{m+1\cdots}$ and the gradient of $W$
\[
Y^{m+1\cdots}=-\frac{\Delta t}{r(\omega)}T\sum_{1\leq i\leq d}A^{i}(Y,\omega)\partial_{i}\left(\begin{array}{c}
W\\
0
\end{array}\right)+O(\Delta t^{2}),\quad T=\left(\begin{array}{cc}
0_{m} & 0_{m(n_{v}-1)}\\
0_{m(n_{v}-1)} & 1_{m(n_{v}-1)}
\end{array}\right).
\]
Reinjecting this approximation in the first row of (\ref{eq:equiv_system_Y})
provides a simpler system involving only $W$. We denote it by the
\textsl{equivalent equation} in the following. It takes the form
\begin{equation}
\partial_{t}W+\sum_{1\leq i\leq d}\partial_{i}Q^{i}(W)-\Delta t\sum_{1\leq i,j\leq d}\partial_{i}\left(D^{i,j}(W,\omega)\partial_{j}W\right)=O(\Delta t^{2}).\label{eq:equiv_eq_W}
\end{equation}

\subsection{Stability conditions}

Once we have obtained the equivalent system (\ref{eq:equiv_system_Y})
and the equivalent equation (\ref{eq:equiv_eq_W}) we can study their
stability.

\subsubsection{Equivalent system}

From the theory developed in Section (\ref{sec:vec_kin}), we have
stability under the subcharacteristic condition. This condition ensures
dissipation of the kinetic entropy $\tilde{\sigma}(Y)=\tilde{\sigma}(W,Y^{m+1\cdots mn_{v}})=\sum_{k}s_{k}(F_{k}).$
We also know that
\[
\tilde{\sigma}(W,0)=s(W),
\]
corresponds to the minimum of $\tilde{\sigma}$ with respect to the
flux error $Y^{m+1\cdots mn_{v}}$. It is therefore expected that
the change of variables $Y\mapsto\nabla_{Y}\tilde{\sigma}(Y)$ symmetrizes
the equivalent system (\ref{eq:equiv_system_Y}), which is thus hyperbolic.
We have also seen that the computation of the kinetic entropies $s_{k}$
is not necessarily easy. That is why in the following we try to find
directly a symmetrization for the first order part of the equivalent
system, rather than computing the kinetic entropy. We introduce the
following definition.
\begin{defn}
\label{def:hyperbolic}We shall say that the equivalent system (\ref{eq:equiv_system_Y})
is \textsl{hyperbolic} iff we can find a matrix $P^{0}(Y)$, $0\leq i\leq d$,
such that $P^{0}(Y)$ is symmetric positive definite and $P^{0}(Y)A^{i}(Y,\omega)$
is symmetric for $1\leq i\leq d$. 
\end{defn}

\begin{rem}
Our definition of hyperbolicity is stronger than the usual one, which
only states that 
\[
\sum_{i=1}^{d}N_{i}A^{i}(Y,\omega)
\]
is diagonalizable with real eigenvalues for all directions $N$. We
expect that 
\[
P^{0}(Y)=\nabla^{2}\tilde{\sigma}(Y),
\]
at least in the case of the linear transport equation discussed in
Section \ref{subsec:trans_example}. But we don't know if the stability
condition of Definition \ref{def:hyperbolic} is equivalent to the
convexity of the dual kinetic entropies. Maybe that it is true. For
a discussion around these questions, we refer to \cite{bouchut2005stability}.
\end{rem}

\subsubsection{Equivalent equation}

For the equivalent equation, we already know that its first order
part is hyperbolic. The stability thus depends on the sign of the
second-order terms. This gives another stability criterion. For obtaining
this criterion, we multiply (\ref{eq:equiv_eq_W}) on the left by
$D_{W}s(W)$ and we integrate by part the second-order term. The entropy
is dissipated if
\[
\sum_{ij,k,\ell}\partial_{k,\ell}^{2}s\partial_{i}W^{k}D^{i,j}\partial_{j}W^{\ell}\geq0.
\]
We thus introduce the quadratic form acting on a second-order tensor
$\alpha_{i}^{k}$:
\[
\alpha\mapsto q(\alpha)=\sum_{i,j,k,\ell}\partial_{k,\ell}^{2}s(W)D^{i,j}(W,\omega)\alpha_{i}^{k}\alpha_{j}^{l}.
\]

\begin{defn}
The equivalent equation (\ref{eq:equiv_eq_W}) is \textsl{dissipative}
iff the quadratic form $q(\alpha)$ is positive. 
\end{defn}

\begin{rem}
This definition amounts to checking that the Hessian matrix $\nabla_{\alpha}^{2}q(\alpha)$
is positive. This (symmetric) matrix is of size $md\times md$. In
the scalar case $m=1$, which we study below, the condition is simpler.
It simply states that the quadratic form
\[
x\mapsto\sum_{i,j}D^{i,j}x_{i}x_{j}
\]
is positive.
\end{rem}

\section{Applications to the transport equation}

Now we apply the equivalent system analysis and the equivalent equation
analysis to the simple scalar transport equation (thus $m=1$)
\[
\partial_{t}w+\sum_{i=1}^{d}\partial_{i}(v_{i}w)=0,
\]
where the velocity vector $(v_{1},\ldots, v_{d})$ is supposed
to be constant. We consider the cases $d=1$ or $d=2$ and the D1Q2,
D2Q3 and D2Q4 models.

For each model, we compute the equivalent system. The expansion of
\[
\frac{\mathcal{S}(\Delta t)-\mathcal{S}^{-1}(\Delta t)}{2\Delta t}Y(\cdot,t)=\partial_{t}Y(\cdot,t)+O(\Delta t^{2}),
\]
is performed with the Computer Algebra System (CAS) Maple. This is
done by entering the explicit definition of the symmetric operator
given in Section \ref{subsec:op_notations}, step by step. Without
a CAS, the calculations would be extremely tedious... 

\subsection{D1Q2}

For the D1Q2 model, we use the notations
\global\long\def\om{\omega}%
\global\long\def\Dt{\Delta t}%
\global\long\def\y{y}%
\global\long\def\lam{\lambda}%
\global\long\def\w{w}%
\global\long\def\cc{v}%
\[
Y=\left(\begin{array}{c}
w\\
y
\end{array}\right),\quad v_{1}=v.
\]
The equivalent system reads 
\begin{equation}
\begin{split} & \partial_{t}\begin{pmatrix}w\\
y
\end{pmatrix}-\frac{\om(\om-2)(\om^{2}-2\om+2)}{2\Dt(\om-1)^{2}}\begin{pmatrix}0\\
\y
\end{pmatrix}+\begin{pmatrix}\cc & \gamma_{1}\\
(\lam^{2}-\cc^{2})\gamma_{1} & \frac{-\cc(\om^{4}-4\om^{3}+6\om^{2}-4\om+2)}{2(\om-1)^{2}}
\end{pmatrix}\partial_{x}\begin{pmatrix}\w\\
\y
\end{pmatrix}\\
 & +\begin{pmatrix}-(\om^{2}-6\om+6)(\lam^{2}-\cc^{2}) & 3\cc(\om^{2}-2\om+2)\\
3\cc(\lam^{2}-\cc^{2})(\om^{2}-2\om+2) & -5\cc^{2}\om^{2}-3\lam^{2}\om^{2}+6\cc^{2}\om+10\lam^{2}\om-6\cc^{2}-10\lam^{2}
\end{pmatrix}\\
 & \times\frac{\Dt\om(\om-2)}{32(\om-1)^{2}}\partial_{xx}\begin{pmatrix}\w\\
\y
\end{pmatrix}=O(\Dt^{2}),
\end{split}
\label{eq:eq_sys_S_D1Q2_ordre_2}
\end{equation}
with $\gamma_{1}=\frac{(\om-2)^{2}(\om^{2}-2\om+2)}{8(\om-1)^{2}}.$
We can check that when $\om=2$, we indeed obtain an equivalent system
with the simplification (\ref{eq:om2_rB}) and (\ref{eq:om2_A}).
The stiff term vanishes and the evolution of $w$ is uncoupled from
that of $y$ at order 2. This means that the consistency is achieved
even when the flux error $y$ is large.

For obtaining the equivalent equation we assume that we have $\y=O(\Dt)$.
Let us write $\y=\Dt\tilde{\y}$. We obtain 
\[
\frac{\om(\om-2)(\om^{2}-2\om+2)}{2(\om-1)^{2}}\tilde{\y}=\frac{(\lam^{2}-\cc^{2})(\om-2)^{2}(\om^{2}-2\om+2)}{8(\om-1)^{2}}\partial_{x}\w+O(\Dt).
\]
By simplifying, we have 
\begin{equation}
\y=\frac{(\lam^{2}-\cc^{2})(\om-2)}{4\om}\Dt\partial_{x}\w+O(\Dt^{2}).
\label{eq:relation_y_dxw}
\end{equation}
By reinjecting this expression of $\y$ in the first equation of equivalent
system \ref{eq:eq_sys_S_D1Q2_ordre_2}, we obtain the equivalent equation
on $\w$ 
\begin{align*}
 & \partial_{t}\w+\cc\partial_{x}\w+\frac{(\om-2)^{2}(\om^{2}-2\om+2)}{8(\om-1)^{2}}\frac{(\lam^{2}-\cc^{2})(\om-2)}{4\om}\Dt\partial_{xx}\w\\
 & -\frac{\Dt\om(\om-2)}{32(\om-1)^{2}}(\om^{2}-6\om+6)(\lam^{2}-\cc^{2})\partial_{xx}\w=O(\Dt^{2}),
\end{align*}
which can be simplified in 
\begin{equation}
\partial_{t}\w+\cc\partial_{x}\w=\frac{1}{2}\left(\frac{1}{\om}-\frac{1}{2}\right)(\lam^{2}-\cc^{2})\Dt\partial_{xx}\w+O(\Dt^{2}).\label{eq:eq_eq_S_D1Q2_ordre_2}
\end{equation}
\modifa{We can notice that we recover the equivalent equation given in \cite{dubois2008equivalent,graille2014approximation,courtes2020vectorial}}.


\begin{thm}
\label{prop:stability_D1Q2}
When \modifa{$1\leq \om < 2$}, the sub-characteristic diffusive stability condition of the $D1Q2$ model is
$$
|v|<\lam.
$$
\end{thm}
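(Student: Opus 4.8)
The plan is to read the stability condition directly off the equivalent equation (\ref{eq:eq_eq_S_D1Q2_ordre_2}) by applying the dissipativity criterion. In the scalar one-dimensional case ($m=1$, $d=1$), dissipativity in the sense of the definition above — and of the remark following it — reduces to the positivity of the single diffusion coefficient multiplying $\partial_{xx}\w$. From (\ref{eq:eq_eq_S_D1Q2_ordre_2}) this coefficient is
\[
D^{1,1}(\w,\om)=\frac{1}{2}\left(\frac{1}{\om}-\frac{1}{2}\right)(\lam^{2}-\cc^{2}).
\]
Since the first-order part of the equivalent equation is the scalar transport operator $\partial_{t}\w+\cc\partial_{x}\w$, which is trivially hyperbolic (take $P^{0}=1$ in Definition \ref{def:hyperbolic}), the stability of (\ref{eq:equiv_eq_W}) is governed entirely by the sign of $D^{1,1}$.

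The remaining step is to determine that sign under the hypothesis $1\leq\om<2$. This range gives $1/\om\in(1/2,1]$, hence $1/\om-1/2>0$, so the prefactor is strictly positive. Since $\lam>0$ and $\cc\in\mathbb{R}$, the factor $\lam^{2}-\cc^{2}$ is positive if and only if $|\cc|<\lam$. Therefore $D^{1,1}>0$ if and only if $|\cc|<\lam$, which is the claimed sub-characteristic diffusive stability condition. One checks that it coincides with the condition (\ref{eq:trans_sub}) delivered by the entropy analysis of Section \ref{subsec:trans_example}, which is the agreement advertised in the introduction.

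There is no genuine obstacle once (\ref{eq:eq_eq_S_D1Q2_ordre_2}) has been established; the only delicate point is the excluded endpoint $\om=2$, where the prefactor $1/\om-1/2$ vanishes, the $O(\Delta t)$ diffusion term disappears, and this diffusive argument degenerates — which is exactly why the statement is restricted to $1\leq\om<2$. If desired, one can additionally observe that the full equivalent \emph{system} (\ref{eq:eq_sys_S_D1Q2_ordre_2}) is symmetrizable under the very same condition $|\cc|<\lam$, the Hessian of $\widetilde{\sigma}$ from (\ref{eq:sigma_wy}) serving as the symmetrizer $P^{0}$, so that the hyperbolicity of the equivalent system and the dissipativity of the equivalent equation yield a single consistent threshold; but this is not needed for the statement as phrased.
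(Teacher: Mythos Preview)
Your argument is correct and follows essentially the same route as the paper: both proofs read the diffusive stability condition directly from the sign of the coefficient $\frac{1}{2}\left(\frac{1}{\om}-\frac{1}{2}\right)(\lam^{2}-\cc^{2})$ in the equivalent equation (\ref{eq:eq_eq_S_D1Q2_ordre_2}), using that $\frac{1}{\om}-\frac{1}{2}>0$ for $\om\in[1,2)$. Your additional remarks on the endpoint $\om=2$ and on the symmetrizer of the equivalent system are accurate but go beyond what the paper's own proof contains.
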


\begin{proof}
The equivalent equation on $\w$ of the $D1Q2$ model \eqref{eq:eq_eq_S_D1Q2_ordre_2} is stable if the diffusion term is positive.
As $\om \in [1,2)$, the term $\left( \frac{1}{\om} - \frac{1}{2} \right)$ is positive.
The positivity of the diffusion term is then equivalent to
$$\lam^2 - v^2 > 0, $$
which gives us the stability condition $$|\cc | < \lam.$$
\end{proof}

\begin{rem}
When $\om=2$, the diffusion term of the equivalent equation of the $D1Q2$ model disappears, which gives us
$$
\partial_t \w + \cc\partial_x \w 
=O(\Dt^2).
$$
We obtain that the solution given by the $D1Q2$ model is an approximation of order $2$ of the solution of the initial equation.
\end{rem}

\begin{thm}
\label{prop:hyperbolicity_D1Q2}
The matrix $$
P= \begin{pmatrix}
1 & 0 \\ 0 & \frac{1}{\lam^2 - v^2}
\end{pmatrix},
$$
symmetrizes the equivalent system of the $D1Q2$ model \eqref{eq:eq_sys_S_D1Q2_ordre_2}, if the  diffusive
sub-characteristic stability condition
is satisfied. Consequently, the equivalent system \eqref{eq:eq_sys_S_D1Q2_ordre_2} is hyperbolic if
$$
|v| < \lam.
$$
\end{thm}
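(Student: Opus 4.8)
The plan is to verify Definition \ref{def:hyperbolic} directly with the candidate $P^0(Y)=P$. Since the space dimension is $d=1$, this reduces to two elementary checks: that $P$ is symmetric positive definite, and that $PA^{1}$ is symmetric, where $A^{1}=A^{1}(Y,\om)$ is the first-order matrix read off from the equivalent system \eqref{eq:eq_sys_S_D1Q2_ordre_2}, namely
\[
A^{1}=\begin{pmatrix} v & \gamma_1 \\ (\lam^{2}-v^{2})\gamma_1 & \dfrac{-v(\om^{4}-4\om^{3}+6\om^{2}-4\om+2)}{2(\om-1)^{2}}\end{pmatrix},\qquad \gamma_1=\frac{(\om-2)^{2}(\om^{2}-2\om+2)}{8(\om-1)^{2}}.
\]

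First I would compute $PA^{1}$. As $P$ is diagonal, left multiplication by $P$ leaves the first row of $A^{1}$ unchanged and multiplies its second row by $1/(\lam^{2}-v^{2})$. Hence the $(1,2)$ entry of $PA^{1}$ is $\gamma_1$ while its $(2,1)$ entry is $\tfrac{1}{\lam^{2}-v^{2}}\cdot(\lam^{2}-v^{2})\gamma_1=\gamma_1$: the two off-diagonal entries coincide, so $PA^{1}$ is symmetric. Notice that this holds for \emph{every} value of $\om$ — the symmetrizer does not depend on $\om$ — and that the apparent asymmetry of $A^{1}$ is cancelled precisely by the factor $\lam^{2}-v^{2}$ carried by its $(2,1)$ entry; this is the only place where the subcharacteristic condition enters, through the mere requirement that $P$ be well defined and positive. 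Then I would check that $P$ is symmetric positive definite: symmetry is immediate, and by Sylvester's criterion $P$ is positive definite iff its leading principal minors are positive, i.e. iff $1>0$ and $\tfrac{1}{\lam^{2}-v^{2}}>0$; the latter is equivalent to $\lam^{2}-v^{2}>0$, that is $|v|<\lam$, which is exactly the diffusive subcharacteristic condition of Theorem \ref{prop:stability_D1Q2}. Combining the two checks, Definition \ref{def:hyperbolic} is met with $P^0=P$, so the equivalent system \eqref{eq:eq_sys_S_D1Q2_ordre_2} is hyperbolic whenever $|v|<\lam$.

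I do not expect a genuine obstacle: the computation is a one-line diagonal rescaling of $A^{1}$, and the positivity is Sylvester's criterion. The one point worth recording is the consistency with the entropy picture promised in the remark following Definition \ref{def:hyperbolic}: from \eqref{eq:sigma_wy} the microscopic entropy in the $(w,y)$ variables is $\widetilde\sigma(w,y)=\tfrac{w^{2}}{2}+\tfrac{y^{2}}{2(\lam^{2}-v^{2})}$, so $P=\nabla^{2}\widetilde\sigma$ exactly, and its positive definiteness is equivalent to the convexity of $\widetilde\sigma$, i.e. to \eqref{eq:trans_sub}. Optionally one can also note that the second-order block $B^{i,j}(Y,\om)$ appearing in \eqref{eq:eq_sys_S_D1Q2_ordre_2} plays no role in this statement, since Definition \ref{def:hyperbolic} constrains only the first-order matrices $A^{i}$.
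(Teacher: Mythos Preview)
Your proof is correct and follows essentially the same approach as the paper: both verify the two requirements of Definition~\ref{def:hyperbolic}, namely symmetry of $PA^{1}$ and positive definiteness of $P$, and both obtain the condition $|v|<\lam$ from the latter. The only presentational difference is that the paper starts from a generic symmetric ansatz $P=\begin{pmatrix}p_1&p_2\\p_2&p_3\end{pmatrix}$, writes down the single symmetry constraint on $PA$, and then specializes to $p_2=0$, $p_1=1$ to \emph{derive} $P$, whereas you verify the given $P$ directly; since the matrix is already stated in the theorem, your direct verification is perfectly adequate and slightly more economical. Your closing remark that $P=\nabla^{2}\widetilde\sigma$ from \eqref{eq:sigma_wy} is a nice addition that the paper only alludes to in the remark following Definition~\ref{def:hyperbolic}.
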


\begin{proof}
We search a matrix $P=\begin{pmatrix}
\Pu & \Pv \\ \Pv & \Pw
\end{pmatrix}$
such as $P A$ is symmetric and $P$ is symmetric positive definite. We have 

\begin{align*}
P A
&=
\begin{pmatrix}
\Pu & \Pv \\ \Pv & \Pw 
\end{pmatrix}
\begin{pmatrix}
\cc&
\gamma_1
\\
(\lam^2-\cc^2)\gamma_1
&
-\cc \gamma_2
\end{pmatrix},
\\&=
\begin{pmatrix}
v \Pu +  (\lam^2 - v^2) \gamma_1 \Pv &
\gamma_1 \Pu - v  \gamma_2 \Pv \\
v \Pv + (\lam^2 - v^2) \gamma_1 \Pw &
 \gamma_1 \Pv - v \gamma_2 \Pw
\end{pmatrix}
.
\end{align*}
with 
$\gamma_1=\frac{(\om-2)^2(\om^2-2\om+2)}{8(\om-1)^2},$
and
$\gamma_2=
\frac{(\om^4-4\om^3+6\om^2-4\om+2)}{2(\om-1)^2}.$
As we want $P A$ to be symmetric, we need to satisfy the condition
$$
\gamma_1 \Pu - v  \gamma_2 \Pv =
v \Pv + (\lam^2 - v^2) \gamma_1 \Pw,
$$
which is equivalent to 
\begin{align*}
\Pw &= \frac{1}{(\lam^2 - v^2)} \Pu - v \frac{1+ \gamma_2}{(\lam^2 - v^2) \gamma_1 } \Pv.
\end{align*}
Let us choose $\Pv=0$ and $\Pu=1$. We obtain
$$
P=\begin{pmatrix}
1 & 0 \\ 0 & \frac{1}{\lam^2 - v^2}
\end{pmatrix}.
$$
As its eigenvalues are $1$ and $\frac{1}{\lam^2 - v^2}$, $P$ is definite positive if 
$$
|v| < \lam.
$$
\end{proof}

\begin{rem}
We obtain the same condition on $v$ and $\lam$ as for the diffusive stability condition given in Proposition \ref{prop:stability_D1Q2}. In this case, the diffusive analysis and the hyperbolicity analysis give the same stability condition.
\end{rem}


\subsection{D2Q3}

\global\long\def\a{a}%
\global\long\def\b{b}%
For the D2Q3, we use the notations
\[
Y=\left(\begin{array}{c}
w\\
y_{1}\\
y_{2}
\end{array}\right),\quad v_{1}=a,\quad v_{2}=b.
\]
The equivalent system of the $D2Q3$ model is
\begin{equation}
\begin{split}\partial_{t}\begin{pmatrix}\w\\
\y_{1}\\
\y_{2}
\end{pmatrix} & -\frac{\om(\om-2)(\om^{2}-2\om+2)}{2\Dt(\om-1)^{2}}\begin{pmatrix}0\\
\y_{1}\\
\y_{2}
\end{pmatrix}\\
 & +\begin{pmatrix}\a & -2\gamma_{1} & 0\\
\gamma_{1}(2\a+\lam)(\a-\lam) & \gamma_{2}(2\a-\lam) & 0\\
\gamma_{1}\b(2\a+\lam) & 2\b\gamma_{2} & \gamma_{2}\lam
\end{pmatrix}\partial_{1}\begin{pmatrix}\w\\
\y_{1}\\
\y_{2}
\end{pmatrix}\\
 & +\begin{pmatrix}\b & 0 & -2\gamma_{1}\\
\gamma_{1}\b(2\a+\lam) & 0 & \gamma_{2}(2\a+\lam)\\
\gamma_{1}(\a\lam+2\b^{2}-\lam^{2}) & \gamma_{2}\lam & 2\b\gamma_{2}
\end{pmatrix}\partial_{2}\begin{pmatrix}\w\\
\y_{1}\\
\y_{2}
\end{pmatrix}=O(\Dt),
\end{split}
\label{eq:eq_sys_S_D2Q3}
\end{equation}
with $\gamma_{1}=-\frac{1}{16}\frac{(\om^{2}-2\om+2)(\om-2)^{2}}{(\om-1)^{2}}$
and $\gamma_{2}=-\frac{1}{4}\frac{\om^{4}-4\om^{3}+6\om^{2}-4\om+2}{(\om-1)^{2}}$.

For getting the equivalent equation, we assume that $(y_{1},y_{2})=O(\Dt)$.
As above, we express the flux error in function of the gradient of
$w$ up to order 2. This gives: 
\[
\y_{1}=\Dt\left(\frac{1}{\om}-\frac{1}{2}\right)\frac{\left(2\a+\lam\right)}{2}\left(\left(\a-\lam\right)\partial_{1}\w+\b\partial_{2}\w\right)+O(\Dt^{2}),
\]
and 
\[
\y_{2}=\frac{\Dt}{2}\left(\frac{1}{\om}-\frac{1}{2}\right)\left(\left(2\a\b+\b\lam\right)\partial_{1}\w+\left(\lam\a+2\b^{2}-\lam^{2}\right)\partial_{2}\w\right)+O(\Dt^{2}).
\]
We reinject these expressions of $\y_{1}$ and $\y_{2}$ in the first
equation of the equivalent system (\ref{eq:eq_sys_S_D2Q3}), we obtain
\begin{equation}
\partial_{t}\w+\partial_{1}(a\w) +\partial_{2}(b\w)  =\frac{\Dt}{2}\left(\frac{1}{\om}-\frac{1}{2}\right)\nabla\cdot(\mathcal{D}_{3}\nabla\w)+O(\Dt^{2}),\label{eq:eq_eq_S_D2Q3}
\end{equation}
with the diffusion matrix 
\[
\mathcal{D}_{3}=\begin{pmatrix}\frac{\lam}{2}(\lam+\a)-\a^{2} & -\frac{\lam}{2}\b-\a\b\\
-\frac{\lam}{2}\b-\a\b & \frac{\lam}{2}(\lam-\a)-\b^{2}
\end{pmatrix}.
\]
%

\begin{thm}
\label{prop:stability_D2Q3}
The sub-characteristic stability condition of the $D2Q3$ model is
\begin{equation}\label{eq:stab_d2q3}
\lam^2 -\a^2-\b^2
- \sqrt{
(\a^2+\b^2)^2 + \lam (-2\a^3+6\a\b^2) +\lam^2(\a^2+\b^2)
} > 0.
\end{equation}
\end{thm}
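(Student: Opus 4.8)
The strategy is to reduce the statement to the positive-definiteness of the diffusion matrix $\mathcal{D}_{3}$ appearing in the equivalent equation \eqref{eq:eq_eq_S_D2Q3}, exactly as the condition $|\cc|<\lam$ of Theorem~\ref{prop:stability_D1Q2} amounted to the positivity of the (scalar) diffusion coefficient in the $D1Q2$ case. For the scalar transport equation the first-order part of \eqref{eq:eq_eq_S_D2Q3} is pure advection, hence automatically stable, so the only mechanism that can break stability is the sign of the second-order term. By the dissipativity criterion specialized to $m=1$, the equivalent equation is dissipative if and only if the quadratic form $x\mapsto x^{\intercal}\mathcal{D}_{3}x$ is positive; and since the scalar prefactor $\tfrac12\bigl(\tfrac1\om-\tfrac12\bigr)$ multiplying $\nabla\cdot(\mathcal{D}_{3}\nabla\w)$ is strictly positive for $1\leq\om<2$, it is irrelevant for the sign. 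Thus the sub-characteristic stability condition is precisely that $\mathcal{D}_{3}$ be symmetric positive definite.

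The only computational step is then to read off the two invariants of the explicit $2\times2$ matrix $\mathcal{D}_{3}$. A direct expansion gives
\[
\mathrm{tr}\,\mathcal{D}_{3}=\lam^{2}-\a^{2}-\b^{2},\qquad 4\det\mathcal{D}_{3}=\lam^{4}-3\lam^{2}(\a^{2}+\b^{2})+2\lam\a^{3}-6\lam\a\b^{2},
\]
and subtracting,
\[
(\mathrm{tr}\,\mathcal{D}_{3})^{2}-4\det\mathcal{D}_{3}=(\a^{2}+\b^{2})^{2}+\lam^{2}(\a^{2}+\b^{2})+\lam(-2\a^{3}+6\a\b^{2}),
\]
which is exactly the radicand in \eqref{eq:stab_d2q3}. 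This identity is elementary but tedious, and I would verify it with the same computer algebra system used to produce \eqref{eq:eq_sys_S_D2Q3}.

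To conclude, recall that a real symmetric $2\times2$ matrix has real eigenvalues $\mu_{\pm}=\tfrac12\bigl(\mathrm{tr}\,\mathcal{D}_{3}\pm\sqrt{(\mathrm{tr}\,\mathcal{D}_{3})^{2}-4\det\mathcal{D}_{3}}\bigr)$, with nonnegative discriminant, and is positive definite if and only if its smallest eigenvalue $\mu_{-}$ is positive. Using the computation above, the inequality $\mu_{-}>0$ reads
\[
\lam^{2}-\a^{2}-\b^{2}-\sqrt{(\a^{2}+\b^{2})^{2}+\lam^{2}(\a^{2}+\b^{2})+\lam(-2\a^{3}+6\a\b^{2})}>0,
\]
which is \eqref{eq:stab_d2q3}. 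Equivalently, one may phrase the condition as $\mathrm{tr}\,\mathcal{D}_{3}>0$ together with $\det\mathcal{D}_{3}>0$, but the single-inequality form is the one stated.

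There is no deep obstacle here: the main point is to carry out the determinant expansion and the simplification of $(\mathrm{tr}\,\mathcal{D}_{3})^{2}-4\det\mathcal{D}_{3}$ without a sign error, and to note that the equivalence between $\mu_{-}>0$ and the displayed inequality is safe because the square root is nonnegative, so that inequality already forces $\lam^{2}-\a^{2}-\b^{2}>0$ before squaring.
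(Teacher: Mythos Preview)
Your proof is correct and follows essentially the same approach as the paper: both reduce the sub-characteristic condition to the positive-definiteness of the diffusion matrix $\mathcal{D}_{3}$ and then express this as the positivity of its smallest eigenvalue, yielding \eqref{eq:stab_d2q3}. You are simply more explicit than the paper in displaying the intermediate trace/determinant/discriminant computations and in noting that the prefactor $\tfrac12(\tfrac1\om-\tfrac12)$ is positive for $1\leq\om<2$.
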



\begin{rem}
  \modifa{This condition has a geometric interpretation. It also states that the velocity vector $(a,b)$ has to be inside the triangle formed by the kinetic velocities, which are here $(\lambda,0)$,
  $(-\lambda/2,\lambda\sqrt{3}/2)$ and $(-\lambda/2,-\lambda\sqrt{3}/2)$.
It is not easy to guess it from the inequality (\ref{eq:stab_d2q3}), but it becomes obvious if we 
plot numerically the stability region (see Figure \ref{fig:stab_d2q3}).}
\end{rem}

\begin{proof}

Indeed, with a linear flux, we have
\begin{equation*}
\partial_t \w
+ \partial_{1}(a\w) +\partial_{2}(b\w)
=
\frac{\Dt}{2} \left( \frac{1}{\omega}-\frac{1}{2} \right)
\nabla \cdot (\mathcal{D}_3 \nabla \w)
+O({\Dt}^2),
\end{equation*}
with the diffusion matrix
$$
\mathcal{D}_3
=
\begin{pmatrix}
\frac{\lam}{2}(\lam+\a)- \a^2
& -\frac{\lam}{2} \b - \a\b \\
-\frac{\lam}{2} \b -\a\b
& \frac{\lam}{2}(\lam-\a) -\b^2
\end{pmatrix}.
$$
The eigenvalues of this diffusion matrix are 
$$
d_{1,2}=\frac{1}{2} \left(
\lam^2 -\a^2-\b^2
\pm \sqrt{
(\a^2+\b^2)^2 + \lam (-2\a^3+6\a\b^2) +\lam^2(\a^2+\b^2)
}
\right).
$$
Finally, the model $D2Q3$ is stable if $\mathcal{D}_3$ is positive definite, namely if $d_1 > 0$ and $d_2>0$.
\end{proof}

\begin{thm}
\label{prop:hyperbolicity_D2Q3}
The matrix $$
P=
\begin{pmatrix}
\frac{\lam}{2}{(\a^2-2\a\lam-3\b^2+\lam^2)(2\a+\lam)}
& 0 & 0
\\
0 & -{(\a\lam+2\b^2-\lam^2)} & \b(2\a+\lam) \\
0 & \b(2\a+\lam) & -{(\a-\lam) (2\a+\lam)}
\end{pmatrix}.
$$
symmetrizes the equivalent system of the $D2Q3$ model \eqref{eq:eq_sys_S_D2Q3}, if the diffusive
sub-characteristic stability condition \eqref{prop:stability_D2Q3}
is verified. Consequently, the equivalent system \eqref{eq:eq_sys_S_D2Q3} is hyperbolic if
$$
\lam^2 -\a^2-\b^2
- \sqrt{
(\a^2+\b^2)^2 + \lam (-2\a^3+6\a\b^2) +\lam^2(\a^2+\b^2)
} > 0.
$$
\end{thm}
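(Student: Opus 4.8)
The plan is to follow the same route as for the D1Q2 model in Theorem~\ref{prop:hyperbolicity_D1Q2}, now with two convection matrices. I would write the first-order part of the equivalent system~\eqref{eq:eq_sys_S_D2Q3} as $\partial_{t}Y+A^{1}\partial_{1}Y+A^{2}\partial_{2}Y=O(\Dt)$, where $A^{1}$ and $A^{2}$ are the two $3\times3$ matrices displayed there (with the abbreviations $\gamma_{1},\gamma_{2}$), and look, following Definition~\ref{def:hyperbolic}, for a symmetric matrix
\[
P=\begin{pmatrix}\Pu & \Pv & \Pw\\ \Pv & \Px & \Py\\ \Pw & \Py & \Pz\end{pmatrix}
\]
such that $PA^{1}$ and $PA^{2}$ are both symmetric. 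Imposing symmetry of $PA^{1}$ gives three homogeneous linear relations between $\Pu,\dots,\Pz$, and symmetry of $PA^{2}$ gives three more; as in the D1Q2 case I expect the factors $\gamma_{1},\gamma_{2}$ to cancel out of these relations, so that the condition is purely algebraic in $\a,\b,\lam$. An explicit resolution of this homogeneous system with a CAS should produce (up to a positive scalar factor) the matrix $P$ of the statement, and substituting it back to check that $PA^{1}$ and $PA^{2}$ are symmetric is then mechanical.

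The real content is the positive-definiteness of $P$ under~\eqref{eq:stab_d2q3}, and here I would exploit the algebraic coincidences linking $P$ to the diffusion matrix $\mathcal{D}_3$ of the equivalent equation~\eqref{eq:eq_eq_S_D2Q3}. A direct computation gives the identities
\[
\Pu=2\det\mathcal{D}_3,\qquad \Px=2(\mathcal{D}_3)_{22},\qquad \det\!\begin{pmatrix}\Px & \Py\\ \Py & \Pz\end{pmatrix}=2\,\Pu .
\]
Since $P$ is block-diagonal with blocks $\Pu$ and $\begin{pmatrix}\Px & \Py\\ \Py & \Pz\end{pmatrix}$, it is positive definite iff $\Pu>0$ and the lower block is positive definite, i.e.\ iff $\Pu>0$, $\Px>0$ and $\det\begin{pmatrix}\Px & \Py\\ \Py & \Pz\end{pmatrix}>0$; by the identities above these three scalar inequalities collapse to $\det\mathcal{D}_3>0$ together with $(\mathcal{D}_3)_{22}>0$, which is exactly the positive-definiteness of the $2\times2$ matrix $\mathcal{D}_3$. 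By Theorem~\ref{prop:stability_D2Q3} this is equivalent to~\eqref{eq:stab_d2q3}. Hence $P$ symmetrizes the equivalent system precisely on the sub-characteristic stability region, and there the system is hyperbolic in the sense of Definition~\ref{def:hyperbolic}.

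The main obstacle is computational and "discovery-like" rather than conceptual: producing $A^{1},A^{2}$ explicitly (already done with Maple), solving the $6\times6$ homogeneous symmetry system, and above all noticing the collapse of Sylvester's criterion via the identities $\Pu=2\det\mathcal{D}_3$ and $\det\begin{pmatrix}\Px & \Py\\ \Py & \Pz\end{pmatrix}=2\Pu$, which is what makes the equivalence with~\eqref{eq:stab_d2q3} transparent. As a sanity check one also verifies that $2\a+\lam>0$ on the stability region — geometrically, $(\a,\b)$ lies on the $(\lambda,0)$ side of the line $\a=-\lambda/2$ — so that the surviving scalar conditions carry the sign one expects once $\det\mathcal{D}_3>0$.
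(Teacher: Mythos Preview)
Your proposal is correct and follows the paper's route for constructing $P$: impose symmetry of $PA^{1}$ and $PA^{2}$, obtain a one-parameter family, and normalize. The paper does exactly this (with the free parameter $\Py$, set to $\b(2\a+\lam)$).

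Where you differ is in the positive-definiteness step. The paper computes the three eigenvalues $e_{1}=p_{1}$ and $e_{2},e_{3}$ of the lower $2\times2$ block directly, observes the product relation $e_{2}e_{3}=2e_{1}$, and concludes from $e_{3}>0$. You instead use Sylvester on the block-diagonal structure and the identities $p_{1}=2\det\mathcal{D}_{3}$, $p_{4}=2(\mathcal{D}_{3})_{22}$, $\det\bigl(\begin{smallmatrix}p_{4}&p_{5}\\p_{5}&p_{6}\end{smallmatrix}\bigr)=2p_{1}$, which reduce the question to the positive-definiteness of $\mathcal{D}_{3}$ itself. The two arguments are equivalent at the algebraic level (the lower block of $P$ is congruent to $2\mathcal{D}_{3}$, so $e_{2},e_{3}$ are the eigenvalues of $2\mathcal{D}_{3}$ and $e_{2}e_{3}=4\det\mathcal{D}_{3}=2p_{1}$), but your packaging makes the coincidence with the diffusive condition of Theorem~\ref{prop:stability_D2Q3} completely transparent, whereas in the paper this coincidence is only noted \emph{a posteriori} in the remark. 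Either way the result and its scope are the same.
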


\begin{proof}
~

We are searching for a matrix 
$P=\begin{pmatrix}
\Pu  & \Pv  & \Pw   \\
\Pv  & \Px & \Py \\
\Pw   & \Py & \Pz
\end{pmatrix}$
such as $P A^1$ and $P A^2$ are symmetric and $P$ is symmetric positive definite. 
When we compute the matrices $P A^1$ and $P A^2$, the symmetry imposes $6$ equations on the unknown $\Pu,\Pv,\Pw,\Px,\Py,\Pz$. This gives us the matrix 
$$
P=
\begin{pmatrix}
\frac{\lam}{2}\frac{(\a^2-2\a\lam-3\b^2+\lam^2)\Py}{\b}
& 0 & 0
\\
0 & -\frac{(\a\lam+2\b^2-\lam^2)\Py}{\b (2 \a+\lam)} & \Py \\
0 & \Py & -\frac{(\a-\lam) \Py}{\b}
\end{pmatrix},
$$
where $\Py$ must be chosen. We choose $\Py=\b(2\a+\lam)$.
We obtain 
$$
P=
\begin{pmatrix}
\frac{\lam}{2}{(\a^2-2\a\lam-3\b^2+\lam^2)(2\a+\lam)}
& 0 & 0
\\
0 & -{(\a\lam+2\b^2-\lam^2)} & \b(2\a+\lam) \\
0 & \b(2\a+\lam) & -{(\a-\lam) (2\a+\lam)}
\end{pmatrix}.
$$
The eigenvalues of $P$ are
$$
e_1=\frac{\lam}{2} (\a^2-2\a \lam-3\b^2+\lam^2)(2\a+\lam),
$$
$$
e_2
=
\lam^2-\a^2-\b^2+\sqrt{(\a^2+\b^2)^2 + \lam (-2\a^3+6\a\b^2) +\lam^2(\a^2+\b^2)},
$$
and
$$
e_3=
\lam^2-\a^2-\b^2-\sqrt{(\a^2+\b^2)^2 + \lam (-2\a^3+6\a\b^2) +\lam^2(\a^2+\b^2)}.
$$
By noticing that $e_2>e_3$ and $e_2 e_3 = 2 e_1$, we deduce that $P$ is definite positive if $e_3>0$.
\end{proof}

\begin{rem}
The hyperbolicity condition on $\a$, $\b$ and $\lam$  is the same  as the diffusive stability condition given in the Proposition \ref{prop:stability_D2Q3}.
Here again, the
diffusive analysis and the hyperbolicity analysis are equivalent. 
\begin{figure}[ht]
\begin{center}
\includegraphics[width=0.5\textwidth]{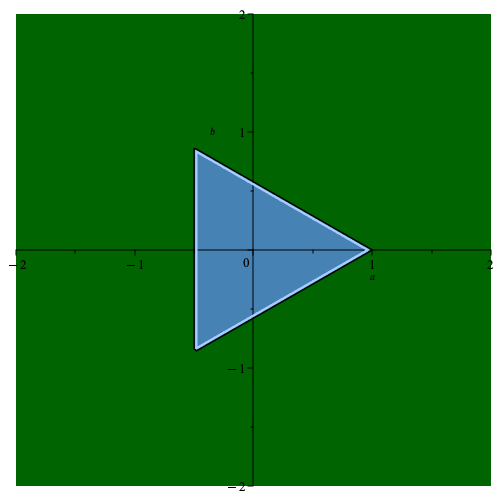}
\caption{Graphic representation of the diffusive stability and hyperbolicity condition of the $D2Q3$ model for $\lambda=1$. The stable region is the blue triangle, whose vertices are the kinetic velocities.}
\label{fig:stab_d2q3}
\end{center}
\end{figure}
\end{rem}

\subsection{D2Q4}

\global\long\def\z{z}%
For the D2Q4, we use the notations
\[
Y=\left(\begin{array}{c}
w\\
y_{1}\\
y_{2}\\
z_{3}
\end{array}\right),\quad v_{1}=a,\quad v_{2}=b.
\]
We can also compute the equivalent system on $(\w,y_{1},y_{2},z_{3})$
of the $D2Q4$ model. We obtain 
\begin{equation}
\begin{split}\partial_{t}\begin{pmatrix}\w\\
\y_{1}\\
\y_{2}\\
\z_{3}
\end{pmatrix} & -\frac{\om(\om-2)(\om^{2}-2\om+2)}{2\Dt(\om-1)^{2}}\begin{pmatrix}0\\
\y_{1}\\
\y_{2}\\
\z_{3}
\end{pmatrix}+\begin{pmatrix}
\a & 2\gamma_{1} & 0 & 0\\
\gamma_{1}(\lam^{2}-2\a^2) & -2\a\gamma_{2} & 0 & \gamma_{2}\\
-2\a\b\gamma_{1} & -2\b\gamma_{2} & 0 & 0\\
2\lam^{2}\a\gamma_{1} & 2\lam^{2}\gamma_{2} & 0 & 0
\end{pmatrix}\partial_{1}\begin{pmatrix}\w\\
\y_{1}\\
\y_{2}\\
\z_{3}
\end{pmatrix}\\
 & +\begin{pmatrix}
 \b & 0 & 2\gamma_{1} & 0\\
-2\a\b\gamma_{1} & 0 & -2\a\gamma_{2} & 0\\
\gamma_{1}(\lam^{2}-2\b^{2}) & 0 & -2\b\gamma_{2} & -\gamma_{2}\\
-2\lam^{2}\b\gamma_{1} & 0 & -2\lam^{2}\gamma_{2} & 0
\end{pmatrix}\partial_{2}\begin{pmatrix}\w\\
\y_{1}\\
\y_{2}\\
\z_{3}
\end{pmatrix}=O(\Dt),
\end{split}
\label{eq:eq_sys_S_D2Q4}
\end{equation}
with $\gamma_{1}=\frac{(\om-2)^{2}(\om^{2}-2\om+2)}{16(\om-1)^{2}}$
and $\gamma_{2}=\frac{\om^{4}-4\om^{3}+6\om^{2}-4\om+2}{4(\om-1)^{2}}$.

With the same method as above, we derive the equivalent equation 
\[
\partial_{t}\w+\partial_{1}(a\w) +\partial_{2}(b\w)=\frac{\Dt}{2}\left(\frac{1}{\om}-\frac{1}{2}\right)\nabla\cdot(\mathcal{D}_{4}\nabla\w)+O({\Dt}^{2}),
\]
with the diffusion matrix 
\[
\mathcal{D}_{4}=\begin{pmatrix}\frac{\lam^{2}}{2}-\a^{2} & -\a\b\\
-\a\b & \frac{\lam^{2}}{2}-\b^{2}
\end{pmatrix}.
\]


\begin{thm}
\label{prop:stability_D2Q4}
The $D2Q4$ model is stable if $
\a^2+\b^2 \leq \frac{\lam^2}{2}
$.
\end{thm}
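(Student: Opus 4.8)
The plan is to argue exactly as for the D1Q2 and D2Q3 models. Since here $m=1$, the dissipativity criterion for the equivalent equation reduces (by the remark treating the scalar case) to the requirement that the $2\times 2$ symmetric diffusion matrix
\[
\mathcal{D}_{4}=\begin{pmatrix}\frac{\lam^{2}}{2}-\a^{2} & -\a\b\\ -\a\b & \frac{\lam^{2}}{2}-\b^{2}\end{pmatrix}
\]
occurring in the equivalent equation be positive (semi)definite. Recall that the prefactor $\frac{\Dt}{2}\bigl(\frac{1}{\om}-\frac{1}{2}\bigr)$ multiplying $\nabla\cdot(\mathcal{D}_{4}\nabla\w)$ is positive for $1\le\om<2$, as assumed throughout the examples, so it plays no role in the sign analysis. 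Thus everything boils down to the sign of the eigenvalues of $\mathcal{D}_4$.

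First I would compute those eigenvalues. The trace is $\lam^{2}-\a^{2}-\b^{2}$, and the discriminant arising in the characteristic polynomial is $(\b^{2}-\a^{2})^{2}+4\a^{2}\b^{2}$. The key (and essentially only) observation is that this discriminant is a perfect square,
\[
(\b^{2}-\a^{2})^{2}+4\a^{2}\b^{2}=(\a^{2}+\b^{2})^{2},
\]
so that the eigenvalues of $\mathcal{D}_4$ simplify to
\[
d_{1}=\frac{\lam^{2}}{2},\qquad d_{2}=\frac{\lam^{2}}{2}-(\a^{2}+\b^{2}).
\]
Since $d_{1}>0$ unconditionally, $\mathcal{D}_4$ is positive semidefinite if and only if $d_{2}\ge 0$, i.e. $\a^{2}+\b^{2}\le\lam^{2}/2$, which is precisely the announced condition. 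Equivalently, one may invoke Sylvester's criterion: $\det\mathcal{D}_4=\frac{\lam^{2}}{4}\bigl(\lam^{2}-2(\a^{2}+\b^{2})\bigr)$, and when $\a^{2}+\b^{2}\le\lam^{2}/2$ the diagonal entries $\frac{\lam^{2}}{2}-\a^{2}$ and $\frac{\lam^{2}}{2}-\b^{2}$ are automatically nonnegative as well.

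I do not expect a genuine obstacle here: once the factorization of the discriminant is noticed, the computation is immediate, and the main risk is merely bookkeeping. The only point deserving a comment is that on the boundary circle $\a^{2}+\b^{2}=\lam^{2}/2$ the matrix $\mathcal{D}_4$ is only positive semidefinite — one transverse diffusion direction degenerates — so stability there is of the marginal, non-strict type, just as in the D1Q2 and D2Q3 cases. Mirroring those two models, one could additionally verify the existence of a symmetrizer $P^{0}(Y)$ as in Definition \ref{def:hyperbolic} for the first-order part of the equivalent system \eqref{eq:eq_sys_S_D2Q4} under the same condition; but since the present statement only claims the diffusive stability, checking the positivity of $\mathcal{D}_4$ is sufficient.
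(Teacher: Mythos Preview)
Your proof is correct and follows the same approach as the paper: both argue that the scalar dissipativity criterion reduces to positive (semi)definiteness of $\mathcal{D}_4$ and then compute its eigenvalues. Your version is in fact a bit cleaner, since you observe that the discriminant is the perfect square $(\a^{2}+\b^{2})^{2}$ and obtain the eigenvalues in the closed form $\lam^{2}/2$ and $\lam^{2}/2-(\a^{2}+\b^{2})$, whereas the paper leaves them unsimplified before concluding; one small caveat is your aside that a symmetrizer for \eqref{eq:eq_sys_S_D2Q4} would exist ``under the same condition'' --- in the paper the hyperbolicity condition for D2Q4 turns out to be the stricter $4\max(\a^{2},\b^{2})<\lam^{2}$, not the diffusive condition $\a^{2}+\b^{2}\le\lam^{2}/2$.
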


\begin{proof}
We have
\begin{equation}
\partial_t \w 
+\partial_{1}(a\w) +\partial_{2}(b\w) 
= \frac{\Dt}{2} \left( \frac{1}{\om}- \frac{1}{2} \right) \nabla \cdot(\mathcal{D}_4 \nabla \w) + O(\Dt),
\end{equation}
with 
$
\mathcal{D}_4=
\begin{pmatrix}
\frac{\lam^2}{2}-\a^2
& -\a\b \\
-\a\b
&
\frac{\lam^2}{2}-\b^2
\end{pmatrix}.
$
The model is stable if the diffusion matrix $\mathcal{D}_4$ is positive. Its eigenvalues are:
\begin{align*}
&e_1=
\frac{1}{2}
\left(
\lam^2 - a^2 -b^2
-\sqrt{
\left(\lam^2 -  a^2 -b^2 \right)^2
-\lam^4
+2 \lam^2 \b^2
+2 \lam^2 \a^2
}
\right) \quad \text{ and }
\\
&
e_2= 
\frac{1}{2}
\left(
\lam^2 -  a^2 -b^2
+\sqrt{
\left(\lam^2-  a^2 -b^2 \right)^2
-\lam^4 
+2 \lam^2 \b^2
+2 \lam^2 \a^2
}
\right).
\end{align*}
As $e_1 \leq e_2$, the eigenvalues are both positive if $e_1 \geq 0$, which means if
$$
\a^2+\b^2 \leq \frac{\lam^2}{2}.
$$
\end{proof}

\begin{thm}
\label{prop:hyperbolicity_D2Q4}
The matrix $$P=
\begin{pmatrix}
\lam^2 (4\a^2-\lam^2) (4\b^2-\lam^2) & 0 & 0 & 0 \\
0 & -2 \lam^2 (4\b^2-\lam^2) & 0 & 2\a (4\b^2-\lam^2) \\
0 & 0 & -2 \lam^2 (4\a^2-\lam^2) & -2\b (4\a^2-\lam^2) \\
0 & 2\a (4\b^2-\lam^2) & -2\b (4\a^2-\lam^2) & -2\a^2-2\b^2+\lam^2
\end{pmatrix},
$$
symmetrizes the equivalent system of the $D2Q4$ model \eqref{eq:eq_sys_S_D2Q4}, if 
\begin{equation}
4 \max(\a^2,\b^2)<\lam^2.
\label{eq:hyperbolicity_D2Q4}
\end{equation}
Consequently, under this condition, the equivalent system \eqref{eq:eq_sys_S_D2Q4} is hyperbolic.
\end{thm}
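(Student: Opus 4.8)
The strategy is the one used for Theorems~\ref{prop:hyperbolicity_D1Q2} and~\ref{prop:hyperbolicity_D2Q3}. Write $P=(p_{ij})_{1\leq i,j\leq 4}$ as an unknown symmetric matrix (ten entries), form $PA^1$ and $PA^2$ with $A^1,A^2$ read off from~\eqref{eq:eq_sys_S_D2Q4}, and impose that both products be symmetric. Each product contributes $\binom{4}{2}=6$ scalar equations, hence twelve linear relations on the $p_{ij}$; since the $A^i$ depend on $\omega$ only through the scalars $\gamma_1,\gamma_2$, matching the coefficients of $\gamma_1$ and $\gamma_2$ turns these into $\omega$-free relations. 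Solving this over-determined but (as it turns out) consistent system, conveniently with the same computer algebra system used to derive~\eqref{eq:eq_sys_S_D2Q4}, should produce a one-parameter family of symmetrizers. A useful shortcut at this stage: the third column of $A^1$ and the second column of $A^2$ both vanish, so the corresponding rows of $PA^1$ and $PA^2$ must vanish as well, which immediately forces several off-diagonal entries of $P$ to be zero; one finds in particular $p_{12}=p_{13}=p_{14}=p_{23}=0$. Fixing the remaining free parameter by $p_{24}=2a(4b^2-\lambda^2)$ yields exactly the matrix $P$ of the statement, and substituting it back confirms that $PA^1$ and $PA^2$ are indeed symmetric.

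It then remains to prove that $P$ is positive definite under the hypothesis $4\max(a^2,b^2)<\lambda^2$. The clean way is to set $\mu=\lambda^2-4a^2>0$ and $\nu=\lambda^2-4b^2>0$ and rewrite the entries of $P$ in terms of $\mu$ and $\nu$: one gets $p_{11}=\lambda^2\mu\nu$, $p_{22}=2\lambda^2\nu$, $p_{33}=2\lambda^2\mu$, $p_{44}=(\mu+\nu)/2$, $p_{24}=-2a\nu$ and $p_{34}=2b\mu$. Since the first coordinate decouples from the other three, Sylvester's criterion only requires the four leading principal minors of $P$; using $4a^2=\lambda^2-\mu$ and $4b^2=\lambda^2-\nu$ the cross terms collapse and one obtains
\[
\Delta_1=\lambda^2\mu\nu,\qquad \Delta_2=2\lambda^4\mu\nu^2,\qquad \Delta_3=4\lambda^6\mu^2\nu^2,\qquad \Delta_4=\det P=4\lambda^4\mu^3\nu^3,
\]
all strictly positive. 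Hence $P$ is symmetric positive definite and, by Definition~\ref{def:hyperbolic}, the equivalent system~\eqref{eq:eq_sys_S_D2Q4} is hyperbolic under~\eqref{eq:hyperbolicity_D2Q4}.

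The main obstacle is the bookkeeping in the first step: organizing the twelve symmetry equations, checking that their $\gamma_1$- and $\gamma_2$-parts are compatible with a single $\omega$-independent symmetrizer, and confirming that the system leaves exactly one degree of freedom rather than collapsing to $P=0$; this rests on the special algebraic structure that $A^1$ and $A^2$ inherit from the kinetic construction. The positivity verification is, by contrast, short once the substitution in $\mu,\nu$ is made; it also makes transparent why the hyperbolicity condition obtained here, $4\max(a^2,b^2)<\lambda^2$, is strictly stronger than the diffusive stability condition $a^2+b^2\leq\lambda^2/2$ of Theorem~\ref{prop:stability_D2Q4}, so that, unlike for the D1Q2 and D2Q3 models, the equivalent-system and equivalent-equation analyses no longer coincide for D2Q4.
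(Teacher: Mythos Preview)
Your proposal is correct and follows essentially the same route as the paper: set up a generic symmetric $P$, impose symmetry of $PA^1$ and $PA^2$, solve the resulting linear system to obtain a one-parameter family, fix the free parameter by the same normalization $p_{24}=2a(4b^2-\lambda^2)$, and then apply Sylvester's criterion. Your substitution $\mu=\lambda^2-4a^2$, $\nu=\lambda^2-4b^2$ is a clean cosmetic improvement over the paper's presentation of the leading minors (which are written in terms of $(4a^2-\lambda^2)$ and $(4b^2-\lambda^2)$ and hence carry signs), but the computed values $\Delta_1,\dots,\Delta_4$ coincide exactly with the paper's $|P_1|,\dots,|P_4|$.
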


\begin{proof}
We are searching for a matrix 
$$P=\begin{pmatrix}
\Pu  & \Pv  & \Pw & \Px \\
\Pv  & \Py & \Pz & \Pa\\
\Pw   & \Pz & \Pb  & \Pc\\
\Px & \Pa & \Pc & \Pp
\end{pmatrix},$$
such as $P A^1$ and $P A^2$ are symmetric and $P$ is symmetric positive definite.
We can compute $P A^1$ and $P A^2$. As we want these matrices to be symmetric, we obtain conditions on the coefficients $p_i$. 
We deduce that 
$$
P=
\begin{pmatrix}
\frac{1}{2 \a} \lam^2(2\a-\lam)(2\a+\lam) \Pa
& 0 & 0 & 0 \\
0 & - \Pa \lam^2/a & 0 & \Pa \\
0 & 0 & - \frac{\Pa \lam^2 (2\a-\lam) (2\a+\lam)}{(\a (2\b-\lam) (2\b+\lam))} & -\frac{\b  \Pa (2\a-\lam) (2\a+\lam)}{(\a (2\b-\lam) (2\b+\lam)) }\\
0 & \Pa & -\frac{\b  \Pa (2\a-\lam) (2\a+\lam)}{(\a (2\b-\lam) (2\b+\lam))}
& -\frac{1 (2\a^2+2\b^2-\lam^2)  \Pa}{2(\a (2\b-\lam) (2\b+\lam))}
\end{pmatrix}.
$$
By choosing
$\Pa = 2\a (2\b-\lam) (2\b+\lam)$, we obtain
\begin{equation}
P=
\begin{pmatrix}
\lam^2 (4\a^2-\lam^2) (4\b^2-\lam^2) & 0 & 0 & 0 \\
0 & -2 \lam^2 (4\b^2-\lam^2) & 0 & 2\a (4\b^2-\lam^2) \\
0 & 0 & -2 \lam^2 (4\a^2-\lam^2) & -2\b (4\a^2-\lam^2) \\
0 & 2\a (4\b^2-\lam^2) & -2\b (4\a^2-\lam^2) & -2\a^2-2\b^2+\lam^2
\end{pmatrix}.
\label{eq:hyperbolicity_matrix_P_D2Q4}
\end{equation}
As $P$ is symmetric, according to the Sylvester's criterion, $P$ is 
positive definite if and only if all the leading principal minors are positive, that is to say if the following conditions are satisfied
$$
\left\{
\begin{array}{ccccc}
|P_1| &=& \lam^2 (4\b^2-\lam^2) (4\a^2-\lam^2) &>& 0,
\\
|P_2| &=& -2 \lam^4  (4\a^2-\lam^2) (4\b^2-\lam^2)^2 &>& 0,
\\
|P_3| &=& 4 \lam^6  (4\a^2-\lam^2)^2 (4\b^2-\lam^2)^2 &>& 0,
\\
|P_4| &=& 4 \lam^4 (4\a^2-\lam^2)^3 (4\b^2-\lam^2)^3 & >&0.
\end{array}
\right.
$$
This is equivalent to 
$$
\left\{
\begin{array}{ccc}
4\a^2  &<& \lam^2,
\\
4\b^2& <&\lam^2,
\end{array}
\right.
$$
which can be rewritten
$$2 \max{(|\a|,|\b|)} < \lam.$$
\end{proof}

\begin{rem}
The hyperbolicity condition obtained is more restrictive than the diffusive stability condition obtained in Proposition \ref{prop:stability_D2Q4}. 
We can see in Figure \ref{fig:conditions_D2Q4} the values of $\a/\lam$ and $\b/\lam$ for which the diffusive stability condition is verified, the circle colored in yellow, are included in the blue square, for which the hyperbolicity condition is checked.
This is coherent with the review of stability conditions given by Bouchut in \cite{bouchut2005stability}.
\begin{figure}[ht]
\begin{center}
\includegraphics[width=0.4\textwidth]{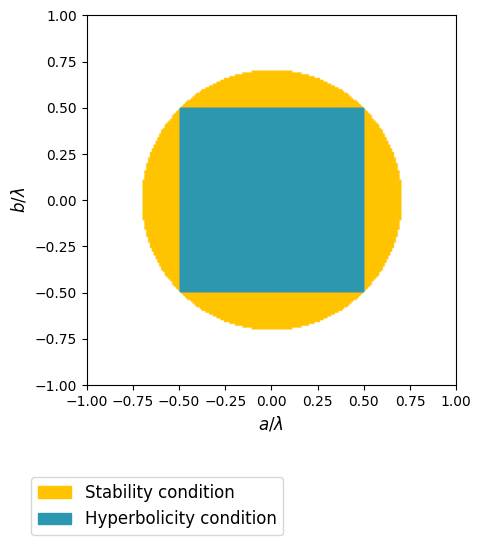}
\caption{Graphic representation of the diffusive stability and hyperbolicity condition of the $D2Q4$ model.}
\label{fig:conditions_D2Q4}
\end{center}
\end{figure}
\end{rem}

\section{Some numerical results}
\subsection{D1Q2: consistency error}

\global\long\def\gam{\gamma}%
\global\long\def\eqeq{\text{eqeq}}%
\global\long\def\eqsys{\text{syseq}}%

Now that we have obtained the equivalent equations, we wish to quantify numerically how they are close to the kinetic equations.
We shall compute analytic solutions of the equivalent equations and compare them with the solutions of the kinetic equation. We shall also compute the error between the two solutions.

For the numerical experiments, we use particular solutions of the
form 
\begin{equation}
\begin{pmatrix}\w\\
\y
\end{pmatrix}(x,t)=\begin{pmatrix}\w_{0}\\
\y_{0}
\end{pmatrix}e^{\gam t}e^{ikx},\label{eq:particular_solution_form}
\end{equation}
with $k\in\mathbb{N}$ and $\gam\in\mathbb{C}$.

\subsubsection{Particular solution of the equivalent equation}
If we inject this particular solution \eqref{eq:particular_solution_form} in the equivalent equation \eqref{eq:eq_eq_S_D1Q2_ordre_2} on $\w$, we obtain
\begin{align*}
\gam \w 
 +  i \cc k \w
 =
 - \frac{\Dt}{2} \left(\frac{1}{\om}-\frac{1}{2} \right)
k^2 \left( \lam^2-\cc^2 \right) \w.
\end{align*}
It gives us the value of $\gam$ with respect to $k$ and $\cc$
\begin{align*}
\gam
&=-
\frac{\Dt}{2} \left(  \frac{1}{\om}-\frac{1}{2} \right)
k^2
\left( \lam^2-\cc^2\right)- i \cc k.
\end{align*}
A particular solution of the equivalent equation \eqref{eq:eq_eq_S_D1Q2_ordre_2} is then
$$
\w=\w_0 e^{-\left( \frac{\Dt}{2} \left(  \frac{1}{\om}-\frac{1}{2} \right)
k^2
\left( \lam^2-\cc^2\right)+i \cc k\right) t} e^{ikx}.
$$
In order to deal with real solutions, we compute the real part of this particular solution, that we denote $\w_{\eqeq}$ and which is still a solution of \eqref{eq:eq_eq_S_D1Q2_ordre_2}
$$
\w_{\eqeq}=\Re(\w)=
\w_0 e^{-\frac{\Dt}{2} \left(  \frac{1}{\om}-\frac{1}{2} \right)
k^2
\left( \lam^2-\cc^2\right)t}\cos\left(k(x- \cc t)\right).
$$
To compute the equivalent equation, we assume that we have the relation between $\y$ and $\partial_x \w$ given by \eqref{eq:relation_y_dxw}
\begin{equation}
\y
=
\frac{(\lam^2-\cc^2)(\om-2)}{4\om} \Dt \partial_x \w.
\label{eq:y_eqeq}
\end{equation}
We denote $\y_{\eqeq}$ the real part of $\y$
\begin{align*}
\y_{\eqeq} &=
\Re(\y),\\
&=
-\frac{(\lam^2-\cc^2)(\om-2)}{4\om} \Dt  k 
\w_0 e^{-\frac{\Dt}{2} \left(  \frac{1}{\om}-\frac{1}{2} \right)
k^2
\left( \lam^2-\cc^2\right)t}\sin\left(k(x- \cc t)\right).
\end{align*}

\subsubsection{Particular solution of the equivalent system\label{sec:slow_fast_alpha}}

Now, we inject the expression of the particular solution \eqref{eq:particular_solution_form} in the equivalent system
\eqref{eq:eq_sys_S_D1Q2_ordre_2}. We obtain 
$$
\left(
\gam I_2 + \frac{1}{\Dt} R(\om)
+  i k A(Y,\om) + \Dt k^2 B(Y,\om)
\right)
\begin{pmatrix}
\w \\ \y
\end{pmatrix}
= 0,
$$
with $R(\om)=\begin{pmatrix}
0 & 0 \\
 0 & r(\om)
\end{pmatrix}$.

The previous system admits two solutions $\gam_1(k)$ and $\gam_2(k)$ depending on $k$, which are the eigenvalues of 
$- \frac{1}{\Dt} R(\om)
-  i k A(Y,\om) - \Dt k^2 B(Y,\om)$.
We have
$$\gam_1(k)=\frac{1}{\Dt}\frac{16\om^4-64\om^3+96\om^2-64\om}{32(\om-1)^2}+O(\Dt^0),$$
and
$$\gam_2(k)= -ikD(W,\om) -\frac{D(W,\om)^2 ( \lam^2\om^2-2 )+2 \lam^2}{4\om} k^2 \Dt+O(\Dt^2).
$$
One of the solutions, $\gam_1(k)$, behaves as $O(\frac{1}{\Dt})$ when $\Dt \to 0$, and the real part of the other solution $\gam_2(k)$ behaves as $O(\Dt)$ when $\Dt \to 0$.
If we compute the particular solution \eqref{eq:particular_solution_form} with the eigenvalue $\gam_1$ in $O(\frac{1}{\Dt})$, we observe that $\y$ decreases rapidly toward $0$.
If we consider instead, the solution given by the second eigenvalue $\gam_2$, $\y$ stays small and has slower variations. We choose to keep this eigenvalue $\gam_2$ for a relevant comparison with the expected behavior.

A particular solution of the equivalent system \eqref{eq:eq_sys_S_D1Q2_ordre_2} is then
$$\begin{pmatrix}
\w \\ \y
\end{pmatrix}
(x,t)
=
\begin{pmatrix}
\w_0 \\ \y_0
\end{pmatrix} 
e^{\gam_2 t} e^{ikx}.$$
To test, we compute the real part of $\w$, that we denote $\w_{\eqsys}$
$$
\w_{\eqsys} =
\Re(\w) = \w_0 e^{\Re(\gam_2)t}\cos\left(\Im(\gam_2)t+kx\right),$$
and the real part of $\y$, denoted by $\y_{\eqsys}$
$$
\y_{\eqsys}=
\Re(\y)
=
e^{\Re(\gam_2)t}
\left( \Re(\y_0) \cos\left(\Im(\gam_2)t+kx\right)
- \Im(\y_0)\sin\left(\Im(\gam_2)t+kx\right) \right).
$$

\subsubsection{Numerical comparison of \(w\)}

We take $k=2$. We choose $\w_0=1$, and we take $\y_0$ such as $(\w_0,\y_0)$ belong to the kernel of the matrix $- \frac{1}{\Dt} R(\om)
-  i k A(Y,\om) - \Dt k^2 B(Y,\om)$.

We denote $\w_{\LB}$ the solution given by the $D1Q2$ model with the initialization
\begin{equation*}
\begin{pmatrix}
\w_{\LB} \\ \y_{\LB}
\end{pmatrix}
(x,0)
=
\begin{pmatrix}
\w_0 \\ \y_0
\end{pmatrix} \cos(kx).
\end{equation*}

We compute the relative $L^2$ error between 
the solution of the equivalent equation $w_{\eqeq}$
and
the solution given by the $D1Q2$ model $\w_{\LB}$
at the final time
$$
\sqrt{
\displaystyle \frac{\displaystyle \sum_{i=0}^{Nx}  \left(\w_{\LB}^{i,Nt}-\w_{\eqeq}^{i,Nt}\right)^2}
{\displaystyle \sum_{i=0}^{Nx}  \left(\w_{\LB}^{i,Nt}\right)^2}},
$$
and the relative $L^2$ error between 
the solution of the equivalent system $w_{\eqsys}$
and $\w_{\LB}$
$$
\sqrt{
\displaystyle \frac{\displaystyle \sum_{i=0}^{Nx}  \left(\w_{\LB}^{i,Nt}-\w_{\eqsys}^{i,Nt}\right)^2}
{\displaystyle \sum_{i=0}^{Nx} \left(\w_{\LB}^{i,Nt}\right)^2}}.
$$ 
We compute the solution for different amounts of time steps
$Nt=16,32,64,128,256,512,1024$ and $2048$, which gives us different time steps $\Dt=\frac{T}{Nt}$, with $T=\pi$.

We obtain the relative errors of Figure \ref{Comparaison_eq_sys_D1Q2_err_w}, for different relaxation parameters $\omega$.

\begin{table}[ht!]
   \begin{tabular}{ccc}
\hline
$\om=2$ & $\om=1.9$ & $\om=1.8$ \\ 
$     \vcenter{\hbox{\includegraphics[width=0.3\textwidth]{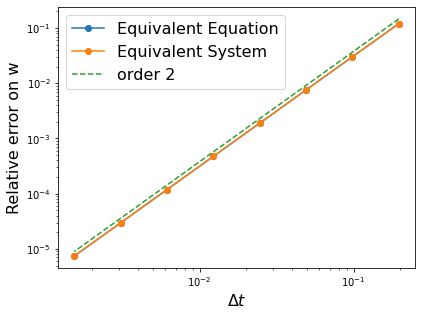}}}$ & 
 $    \vcenter{\hbox{\includegraphics[width=0.3\textwidth]{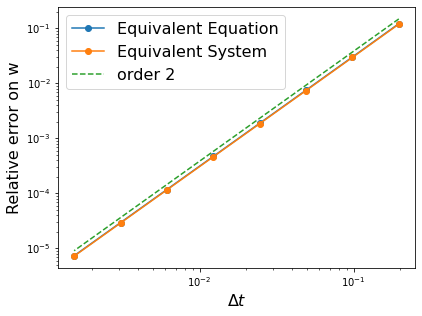}}}$ & 
   $  \vcenter{\hbox{\includegraphics[width=0.3\textwidth]{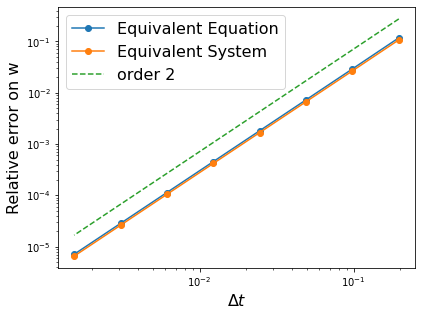}}}$
     \\ 
\hline
$\om=1.7$ & $\om=1.6$ & $\om=1.5$ \\ 
   $  \vcenter{\hbox{\includegraphics[width=0.3\textwidth]{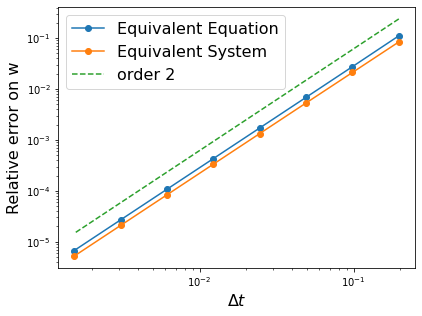}}} $& 
    $ \vcenter{\hbox{\includegraphics[width=0.3\textwidth]{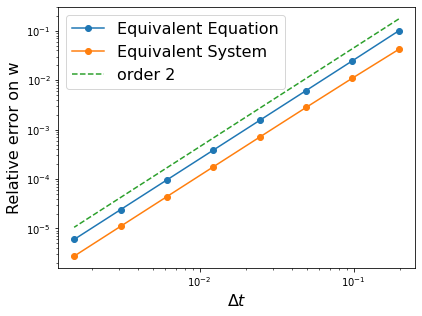}}} $& 
    $ \vcenter{\hbox{\includegraphics[width=0.3\textwidth]{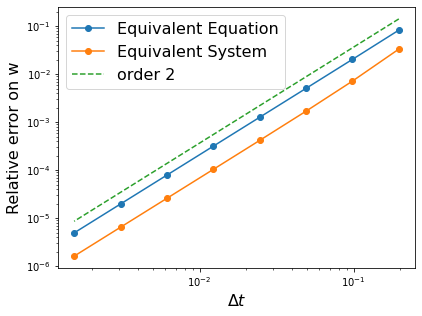}}}$
     \\ 
\hline
$\om=1.4$ & $\om=1.3$ & $\om=1.2$ \\ 
     $\vcenter{\hbox{\includegraphics[width=0.3\textwidth]{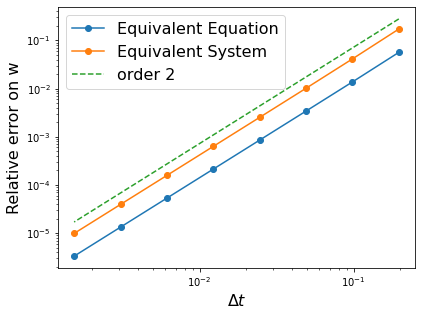}}}$ & 
   $  \vcenter{\hbox{\includegraphics[width=0.3\textwidth]{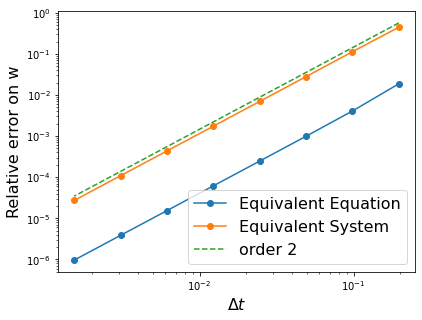}}} $& 
 $    \vcenter{\hbox{\includegraphics[width=0.3\textwidth]{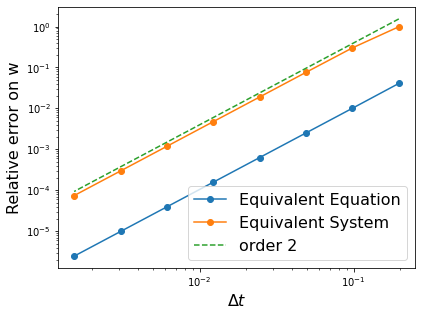}}}$
     \\ \hline
   \end{tabular}
   \caption{Relative $L^2$ error on $\w$ with respect to the time step $\Dt$, for different relaxation parameters $\om$.}
\label{Comparaison_eq_sys_D1Q2_err_w}
 \end{table}

The equivalent equation and the equivalent system both converge at the order $2$ toward the solution given by the $D1Q2$ model.
When $\om\in[1.8,2]$, the equivalent equation and the equivalent system give similar accuracy.
When $\om\in [1.5,1.8]$, the equivalent system is a better approximation of the solution given by the $D1Q2$ model, while when $\om \leq 1.4$, the equivalent equation is more accurate.

\subsubsection{Numerical comparison of y}

Now, we want to compute the error on the flux error $\y$.




We can compute the relative $L^2$ errors between $\y_{LB}$ and the  flux error $\y_{\eqeq}$ that we assume to have in order to compute the equivalent equation and between $\y_{LB}$ and the solution of the equivalent system $\y_{\eqsys}$
$$
\sqrt{
\displaystyle \frac{\displaystyle \sum_{i=0}^{Nx} 
 \left(\y_{\LB}^{i,Nt}-\y_{\eqeq}^{i,Nt}\right)^2}
{\displaystyle \sum_{i=0}^{Nx}  \left(\y_{\LB}^{i,Nt}\right)^2}}
\quad \quad \quad 
\text{ and } 
\quad \quad \quad
\sqrt{
\displaystyle \frac{\displaystyle \sum_{i=0}^{Nx} \left(\y_{\LB}^{i,Nt}-\y_{\eqsys}^{i,Nt}\right)^2}
{\displaystyle \sum_{i=0}^{Nx}  \left(\y_{\LB}^{i,Nt}\right)^2}}.
$$

We obtain the Figure \ref{Comparaison_eq_sys_D1Q2_err_y}. We can observe that the flux error $\y_{\eqeq}$ given by the equivalent equation converges at the order $1$ toward the $\y_{\LB}$ given by the $D1Q2$ model, while the $\y_{\eqsys}$ given by the equivalent system converges at the order $2$.

\begin{table}[ht!]
   \begin{tabular}{ccc}
\hline
$\om=2$ & $\om=1.9$ & $\om=1.8$ \\ 
$     \vcenter{\hbox{\includegraphics[width=0.3\textwidth]{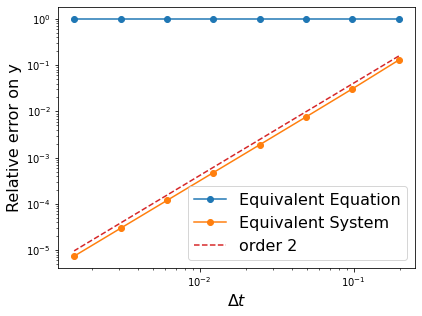}}}$ & 
 $    \vcenter{\hbox{\includegraphics[width=0.3\textwidth]{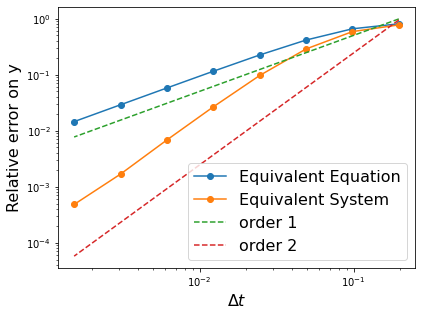}}}$ & 
   $  \vcenter{\hbox{\includegraphics[width=0.3\textwidth]{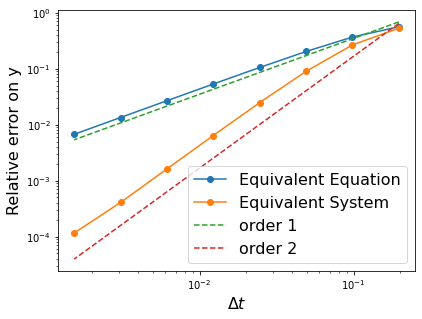}}}$
     \\ 
\hline
$\om=1.7$ & $\om=1.6$ & $\om=1.5$ \\ 
   $  \vcenter{\hbox{\includegraphics[width=0.3\textwidth]{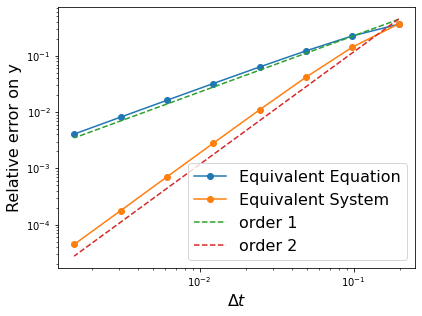}}} $& 
    $ \vcenter{\hbox{\includegraphics[width=0.3\textwidth]{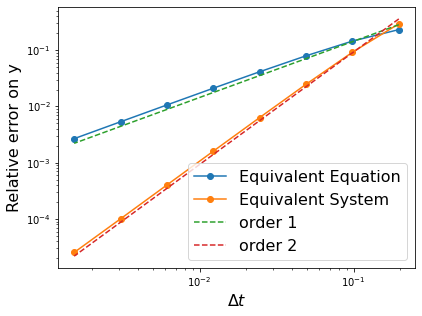}}} $& 
    $ \vcenter{\hbox{\includegraphics[width=0.3\textwidth]{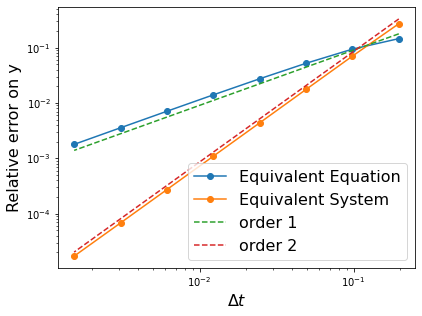}}}$
     \\ 
\hline
$\om=1.4$ & $\om=1.3$ & $\om=1.2$ \\ 
     $\vcenter{\hbox{\includegraphics[width=0.3\textwidth]{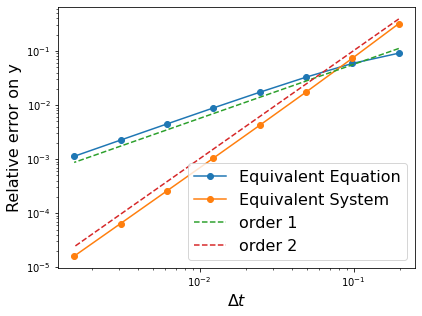}}}$ & 
   $  \vcenter{\hbox{\includegraphics[width=0.3\textwidth]{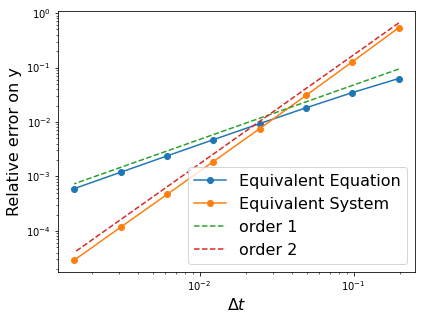}}} $& 
 $    \vcenter{\hbox{\includegraphics[width=0.3\textwidth]{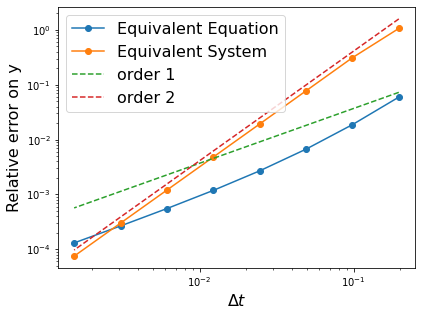}}}$
     \\ \hline
   \end{tabular}
   \caption{Relative $L^2$ error on $\y$ with respect to the time step $\Dt$, for different relaxation parameters $\om$.}
\label{Comparaison_eq_sys_D1Q2_err_y}
 \end{table}

\begin{rem}
When $\om=2$, the error between the flux error $\y$ given by the equivalent equation and the one given by the $D1Q2$ model is constant. This is due to the fact that $\y_{\eqeq}$ is given by 
\eqref{eq:y_eqeq}, which is equal to $0$ when $\omega=2$.
Indeed, when $\om=2$, $\w$ and $\y$ are independent, so we do not have to assume the smallness hypothesis $\y=O(\Dt)$ to deduce the equivalent equation from the equivalent system.
\end{rem}

\subsection{D2Q4: numerical stability}

As we can see in Figure \ref{fig:conditions_D2Q4}, for some choice
of velocity $(v_{1},v_{2})=(\a,\b)$ and norm of the kinetic velocity
$\lam$, the diffusive stability condition can be satisfied, but not
the hyperbolicity condition. We want to test numerically what happened
when we are in this case.

We consider a square geometry $[0,1]\times[0,1]$ with periodic boundary
conditions. We consider $Nx=200$ space steps in both directions.
We denote by $\Delta x=1/Nx$ the grid step. We initialized $\w$
with a Gaussian function centered in the middle of the square 
\[
\w(x,y,0)=e^{-80((x-0.5)^{2}+(y-0.5)^{2})}.
\]
Let us choose $(a,b)=(1,0)$. The stability condition is satisfied
if 
\[
\lam>\sqrt{2(\a^{2}+\b^{2})}=\sqrt{2}.
\]
The hyperbolicity condition is satisfied if 
\[
\lam>2\max(|\a|,|\b|)=2.
\]
We are going to compare the solution obtained with $\lam=1.6$, that
is when the diffusive stability condition is satisfied, but not the
hyperbolicity condition, and $\lam=2.2$, namely when both the diffusive
stability and the hyperbolicity conditions are satisfied.

We draw the solutions $\w(x,y,T)$ at time $T=1$, for different values
of the relaxation parameter: $\om=1.2$, $\om=1.6$ and $\om=2$.

As we are solving the transport step of time step $\frac{\Dt}{4}$
with a Lattice-Boltzmann method, we need to have the relation between
the time and space step 
\[
\Dt=\frac{4\Delta x}{\lam}=\frac{4}{\lam Nx}.
\]
Consequently, the number of time step is 
\[
Nt=\frac{T}{\Dt}=\frac{\lam Nx}{4},
\]
and which depends on the $\lam$ chosen: we do $Nt=80$ time steps
when $\lam=1.6$ and $Nt=110$ steps when $\lam=2.2$.

\begin{table}[t!]
\begin{tabular}{|c|c|c|}
\hline 
 & $\lam=1.6$ & $\lam=2.2$\tabularnewline
\hline 
$\om=2$ & \includegraphics[width=0.41\textwidth]{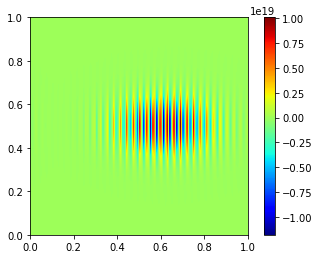} & \includegraphics[width=0.41\textwidth]{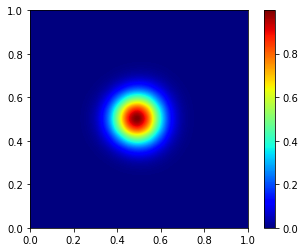}\tabularnewline
\hline 
$\om=1.6$ & \includegraphics[width=0.41\textwidth]{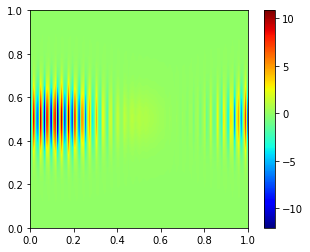} & \includegraphics[width=0.41\textwidth]{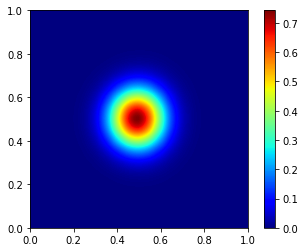}\tabularnewline
\hline 
$\om=1.2$ & \includegraphics[width=0.41\textwidth]{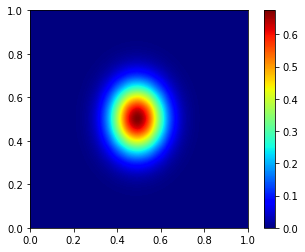} & \includegraphics[width=0.41\textwidth]{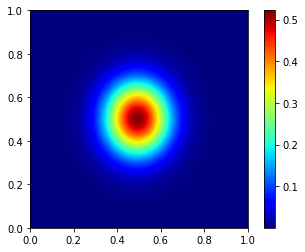}\tabularnewline
\hline 
\end{tabular}\caption{Solutions $\protect\w$ at time $T=1$ for different values of $\protect\lam$
and $\protect\om$.}
\label{tab:comparison_w_with_different_stability_conditions}
\end{table}

We obtained the solution $\w$ of the Table \ref{tab:comparison_w_with_different_stability_conditions}.
When $\lam=2.2$, that is to say when both the diffusive stability
and the hyperbolicity conditions are verified, we obtained a Gaussian
centered in the middle of the square, as expected. However, the closer
the relaxation parameter $\om$ is to $1$, the more the Gaussian
function dampens due to the relaxation step. When $\om=2$ or $\om=1.6$,
the solutions obtained with $\lam=1.6$, namely when the diffusive
stability condition is satisfied but not the hyperbolicity condition,
are not stable. Oscillations appear and grow over time. When $\om=1.2$
and $\lam=1.6$, we obtained a solution close to the expected Gaussian
function, but a little distorted. Moreover, this solution is stable,
we do not observe any oscillations.

\section{Conclusion}

In this work we have provided a general methodology for studying the
stability and the consistency of the Vectorial Lattice-Boltzmann Method
(VLBM). We have first shown that the dual entropy analysis of \cite{bouchut1999construction,dubois2014entropy}
can be applied for a direct and rigorous proof of the stability of
the over-relaxed time splitting algorithm. It is not necessary to
pass through the stiff relaxation intermediary.

Secondly, we have proposed an automatic way to construct an equivalent
system of PDE, consistent with the VLBM. This equivalent system contains
stiff terms in $\Delta t$. The classical equivalent equation can
be derived from the equivalent system by a Chapman-Enskog analysis
when $\Delta t$ is small and the kinetic data close to equilibrium.
It seems, but it is a conjecture, that the hyperbolicity condition
of the equivalent system is exactly the entropy stability condition.

In future works, we plan to investigate this conjecture, perform additional
numerical experiments on truly non-linear systems, beyond the simple
transport equation. Finally, an important question is to extend the
stability analysis for handling boundary conditions correctly (see
\cite{helie2023scheme} for preliminary results).

\bibliographystyle{plain}
\bibliography{cemracs2022_vlbm}

\begin{thebibliography}{10}

\bibitem{aregba2000discrete}
Denise Aregba-Driollet and Roberto Natalini.
\newblock Discrete kinetic schemes for multidimensional systems of conservation
  laws.
\newblock {\em SIAM Journal on Numerical Analysis}, 37(6):1973--2004, 2000.

\bibitem{baty2023robust}
Hubert Baty, Florence Drui, Philippe Helluy, Emmanuel Franck, Christian
  Klingenberg, and Lukas Thanh{\"a}user.
\newblock A robust and efficient solver based on kinetic schemes for
  magnetohydrodynamics ({MHD}) equations.
\newblock {\em Applied Mathematics and Computation}, 440:127667, 2023.

\bibitem{bouchut1999construction}
Fran{\c{c}}ois Bouchut.
\newblock Construction of {BGK} models with a family of kinetic entropies for a
  given system of conservation laws.
\newblock {\em Journal of Statistical Physics}, 95(1-2):113--170, 1999.

\bibitem{bouchut2003entropy}
Fran{\c{c}}ois Bouchut.
\newblock Entropy satisfying flux vector splittings and kinetic {BGK} models.
\newblock {\em Numerische Mathematik}, 94:623--672, 2003.

\bibitem{bouchut2005stability}
Fran{\c{c}}ois Bouchut.
\newblock Stability of relaxation models for conservation laws.
\newblock In {\em European Congress of Mathematics}, pages 95--101. Eur. Math.
  Soc., 2005.

\bibitem{bourdel1989approximation}
Fran{\c{c}}oise Bourdel, Pierre-Alain Mazet, Jean-Pierre Croisille, and
  Philippe Delorme.
\newblock On the approximation of k-diagonalizable hyperbolic systems by finite
  elements-applications to the {E}uler equations and to gaseous mixtures.
\newblock {\em La Recherche Aerospatiale (English Edition)(ISSN 0379-380X},
  5:15--34, 1989.

\bibitem{brownlee2007stability}
RA~Brownlee, Alexander~N Gorban, and Jeremy Levesley.
\newblock Stability and stabilization of the lattice {B}oltzmann method.
\newblock {\em Physical Review E}, 75(3):036711, 2007.

\bibitem{chapman1990mathematical}
Sydney Chapman and Thomas~George Cowling.
\newblock {\em The mathematical theory of non-uniform gases: an account of the
  kinetic theory of viscosity, thermal conduction and diffusion in gases}.
\newblock Cambridge university press, 1990.

\bibitem{coulette2019high}
David Coulette, Emmanuel Franck, Philippe Helluy, Michel Mehrenberger, and
  Laurent Navoret.
\newblock High-order implicit palindromic discontinuous {G}alerkin method for
  kinetic-relaxation approximation.
\newblock {\em Computers \& Fluids}, 190:485--502, 2019.

\bibitem{courtes2020vectorial}
Cl{\'e}mentine Court{\`e}s, David Coulette, Emmanuel Franck, and Laurent
  Navoret.
\newblock Vectorial kinetic relaxation model with central velocity. application
  to implicit relaxations schemes.
\newblock {\em Communications in Computational Physics}, 27(4), 2020.

\bibitem{croisille1990contribution}
Jean-Pierre Croisille.
\newblock {\em Contribution {\`a} l'{\'e}tude th{\'e}orique et {\`a}
  l'approximation par {\'e}l{\'e}ments finis du syst{\`e}me hyperbolique de la
  dynamique des gaz multidimensionnelle et multiesp{\`e}ces}.
\newblock PhD thesis, Paris 6, 1990.

\bibitem{de2012h}
Camillo De~Lellis and L{\'a}szl{\'o} Sz{\'e}kelyhidi~Jr.
\newblock The {H}-principle and the equations of fluid dynamics.
\newblock {\em Bulletin of the American Mathematical Society}, 49(3):347--375,
  2012.

\bibitem{dellar2001bulk}
Paul~J Dellar.
\newblock Bulk and shear viscosities in lattice {B}oltzmann equations.
\newblock {\em Physical Review E}, 64(3):031203, 2001.

\bibitem{dellar2002lattice}
Paul~J Dellar.
\newblock Lattice kinetic schemes for magnetohydrodynamics.
\newblock {\em Journal of Computational Physics}, 179(1):95--126, 2002.

\bibitem{dellar2013interpretation}
Paul~J Dellar.
\newblock An interpretation and derivation of the lattice {B}oltzmann method
  using {S}trang splitting.
\newblock {\em Computers \& Mathematics with Applications}, 65(2):129--141,
  2013.

\bibitem{deshpande1986second}
Suresh~M Deshpande.
\newblock A second-order accurate kinetic-theory-based method for inviscid
  compressible flows.
\newblock Technical report, 1986.

\bibitem{dubois2008equivalent}
Fran{\c{c}}ois Dubois.
\newblock Equivalent partial differential equations of a lattice {B}oltzmann
  scheme.
\newblock {\em Computers \& Mathematics with Applications}, 55(7):1441--1449,
  2008.

\bibitem{dubois2014entropy}
Fran{\c{c}}ois Dubois.
\newblock Simulation of strong nonlinear waves with vectorial lattice
  {B}oltzmann schemes.
\newblock {\em International Journal of Modern Physics C}, 25(12):1441014,
  2014.

\bibitem{ekeland1977legendre}
Ivar Ekeland.
\newblock Legendre duality in nonconvex optimization and calculus of
  variations.
\newblock {\em SIAM Journal on Control and Optimization}, 15(6):905--934, 1977.

\bibitem{fevrier2014extension}
Tony F{\'e}vrier.
\newblock {\em Extension et analyse des sch{\'e}mas de {B}oltzmann sur
  r{\'e}seau: les sch{\'e}mas {\`a} vitesse relative}.
\newblock PhD thesis, Paris 11, 2014.

\bibitem{fjordholm2017construction}
Ulrik~S Fjordholm, Roger K{\"a}ppeli, Siddhartha Mishra, and Eitan Tadmor.
\newblock Construction of approximate entropy measure-valued solutions for
  hyperbolic systems of conservation laws.
\newblock {\em Foundations of Computational Mathematics}, 17:763--827, 2017.

\bibitem{graille2014approximation}
Benjamin Graille.
\newblock Approximation of mono-dimensional hyperbolic systems: A lattice
  {B}oltzmann scheme as a relaxation method.
\newblock {\em Journal of Computational Physics}, 266:74--88, 2014.

\bibitem{harten1983symmetric}
Amiram Harten.
\newblock On the symmetric form of systems of conservation laws with entropy.
\newblock {\em Journal of computational physics}, 49, 1983.

\bibitem{helie2023scheme}
Romane H{\'e}lie.
\newblock {\em Sch{\'e}ma de relaxation pour la simulation de plasmas dans les
  tokamaks}.
\newblock PhD thesis, Strasbourg, 2023.

\bibitem{hiriart2004fundamentals}
Jean-Baptiste Hiriart-Urruty and Claude Lemar{\'e}chal.
\newblock {\em Fundamentals of convex analysis}.
\newblock Springer Science \& Business Media, 2004.

\bibitem{hiriart2013convex}
Jean-Baptiste Hiriart-Urruty and Claude Lemar{\'e}chal.
\newblock {\em Convex analysis and minimization algorithms I: Fundamentals},
  volume 305.
\newblock Springer science \& business media, 2013.

\bibitem{hosseini2023entropic}
Seyed~Ali Hosseini, Mohammad Atif, Santosh Ansumali, and Iliya~V Karlin.
\newblock Entropic lattice {B}oltzmann methods: A review.
\newblock {\em Computers \& Fluids}, page 105884, 2023.

\bibitem{karlin1999perfect}
Ilya~V Karlin, Antonio Ferrante, and Hans~Christian {\"O}ttinger.
\newblock Perfect entropy functions of the lattice {B}oltzmann method.
\newblock {\em Europhysics Letters}, 47(2):182, 1999.

\bibitem{lax1971shock}
Peter Lax.
\newblock Shock waves and entropy.
\newblock In {\em Contributions to nonlinear functional analysis}, pages
  603--634. Elsevier, 1971.

\bibitem{lax1973hyperbolic}
Peter~D Lax.
\newblock {\em Hyperbolic systems of conservation laws and the mathematical
  theory of shock waves}.
\newblock SIAM, 1973.

\bibitem{mock1980systems}
Michael~S Mock.
\newblock Systems of conservation laws of mixed type.
\newblock {\em Journal of Differential equations}, 37(1):70--88, 1980.

\bibitem{perthame1990boltzmann}
Beno{\^\i}t Perthame.
\newblock {B}oltzmann type schemes for gas dynamics and the entropy property.
\newblock {\em SIAM Journal on Numerical Analysis}, 27(6):1405--1421, 1990.

\bibitem{saint2009hydrodynamic}
Laure Saint-Raymond.
\newblock {\em Hydrodynamic limits of the {B}oltzmann equation}.
\newblock Number 1971. Springer Science \& Business Media, 2009.

\end{thebibliography}

\end{document}